\definecolor{ForestGreen}{rgb}{0.1333,0.5451,0.1333}
\definecolor{Red}{rgb}{0.9,0,0}
\begin{document}

\newtheorem*{thm}{Theorem I}

\newtheorem*{theorem*}{Theorem}
\newtheorem{theorem}{Theorem}
\newtheorem{lemma}{Lemma}

\newtheorem{definition}{Definition}
\newtheorem{corollary}{Corollary}
\newtheorem{example}{Example}

\newtheorem{remark}{Remark}

\newcommand{\summ}{\displaystyle\sum}
\newcommand{\prodd}{\displaystyle\prod}
\newcommand{\F}{\mathbb{F}}
\newcommand{\R}{\mathbb{R}}
\newcommand{\E}{\mathbf{E}}
\newcommand{\Var}{\mathbf{Var}}
\newcommand{\mon}{\mathcal{M}}
\newcommand{\poly}{\mathrm{poly}}
\newcommand{\PA}{\widetilde{A}}
\newcommand{\PB}{\widetilde{B}}
\newcommand{\PC}{\widetilde{C}}
\newcommand{\Palpha}{\widetilde{\alpha}}
\newcommand{\Pbeta}{\widetilde{\beta}}
\newcommand{\Pgamma}{\widetilde{\gamma}}

\newcommand{\zo}[1]{\ensuremath{\{0,1\}^{#1}}}

\newcommand{\etao}{\ensuremath{\eta_0}}
\newcommand{\somesrc}{\ensuremath{\textnormal{somewhere random source}}}
\newcommand{\kextractor}{\ensuremath{(k,\epsilon)\textnormal{-extractor}}}
\newcommand{\swsrc}{\ensuremath{s\textnormal{-where random source}}}
\newcommand{\swsrcs}{\ensuremath{s\textnormal{-where random sources}}}
\newcommand{\exmerger}{\ensuremath{\epsilon\textnormal{-extracting merger}}}
\newcommand{\exmultimerger}{\ensuremath{(\epsilon,s)\textnormal{-extracting multimerger}}}
\newcommand{\exmultimergers}{\ensuremath{(\epsilon,s)\textnormal{-extracting multimergers}}}
\newcommand{\kedisp}{\ensuremath{(k,\epsilon)\textnormal{-disperser}}}

\title{Extracting Mergers and Projections of Partitions}
\author{\hfill Swastik Kopparty\thanks{Department of Mathematics and Department of Computer Science, University of Toronto.\newline \textcolor{white}{--------}Research supported by an NSERC Discovery Grant. \newline \textcolor{white}{--------}Email: \href{mailto:swastik.kopparty@utoronto.ca}{swastik.kopparty@utoronto.ca}  \newline \textcolor{white}{----...}} \and Vishvajeet N\thanks{School of Informatics, University of Edinburgh.  \newline   \textcolor{white}{---.----}Research supported by the European Research Council (ERC) under the European Union's  \newline  \textcolor{white}{----...} Horizon 2020 research and innovation programme (grant agreement No. 947778).  \newline \textcolor{white}{----...} Email: \href{mailto:nvishvajeet@gmail.com}{nvishvajeet@gmail.com}}}
\date{}
\maketitle

\begin{abstract}
We study the problem of extracting randomness from somewhere-random sources, and related combinatorial phenomena: partition analogues of Shearer's lemma on projections.

A somewhere-random source is a tuple $(X_1, \ldots, X_t)$ of (possibly correlated) $\{0,1\}^n$-valued
random variables $X_i$ where
for some unknown $i \in [t]$, $X_i$ is guaranteed to be uniformly distributed. 
An {\em extracting merger} is a seeded device that takes a somewhere-random source as input and
outputs nearly uniform random bits. We study the seed-length needed for extracting mergers
with constant $t$ and constant error.

Since a somewhere-random source has min-entropy at least $n$, a standard extractor can also serve
as an extracting merger. Our goal is to understand whether the further structure of being somewhere-random
rather than just having high entropy enables smaller seed-length, and towards this we show:
\begin{itemize}
\item Just like in the case of standard extractors, seedless extracting mergers with even just one output bit do not exist.
\item Unlike the case of standard extractors, it {\em is} possible to have extracting mergers that output a constant number of bits using only constant seed. Furthermore, a random choice of merger does not work for this purpose!
\item Nevertheless, just like in the case of standard extractors, an extracting merger which gets most of the entropy out (namely, having $\Omega(n)$ output bits) must have $\Omega(\log n)$ seed. This is the main technical result of our work, and is proved by a second-moment strengthening of the graph-theoretic approach of Radhakrishnan and Ta-Shma to extractors.
\end{itemize}

All this is in contrast to the status for condensing mergers (where the output is only required to have
high min-entropy), whose seed-length/output-length tradeoffs can all be fully explained by using standard condensers.

Inspired by such considerations, we also formulate a new and basic class of  problems in combinatorics: partition analogues of Shearer's lemma.
We show basic results in this direction; in particular, we prove that in any partition
of the $3$-dimensional cube $[0,1]^3$ into two parts, one of the parts has an axis parallel
 $2$-dimensional projection of area at least $3/4$.

\end{abstract}

\clearpage
\tableofcontents
\newpage 
\section{Introduction}

We study the problem of extracting randomness from somewhere-random sources, and related combinatorial phenomena: partition analogues of Shearer's lemma on projections. For the (completely self-contained) combinatorics, see~\Cref{combintro}, \Cref{sec54} and \Cref{sec6}.

A $t$-part somewhere-random source is a tuple $(X_1, \ldots, X_t)$ of (possibly correlated) $\zo{n}$-valued random variables $X_i$, where some unknown $X_i$ is guaranteed to be uniformly distributed. 
We will take $t$ to be constant and $n$ growing throughout this paper.
A {\em merger} is a seeded device that takes a somewhere-random source and purifies its randomness.
Mergers have been extensively studied in the theory of extractors, and have played an important
role in their development. In fact, there were at least 3 distinct points in the history of extractors~\cite{TaShma,LRVW,DKSS}
when the best known explicit extractor constructions were based on new advances in explicit merger constructions.

An important observation is that $t$-part somewhere-random sources are special cases of sources with (min) entropy rate $1/t$.
Thus any randomness purifying device (such as an extractor, condenser or disperser) that can give guarantees
when fed a source with entropy rate at least $1/t$ is automatically some kind of merger for $t$-part somewhere-random sources.

In the literature, mergers have only been studied in the {\em condensing} regime: where their output
is required to have high entropy rate (rather than requiring the output to be near-uniform).
It turns out that information-theoretically, condensing mergers are completely overshadowed
by classical condensers. A condenser is a seeded device that takes in a source with sufficient
entropy rate and outputs a random variable with high entropy rate. 
Thus a condenser that can operate on sources with entropy rate $1/t$ is automatically a 
condensing merger for $t$-part somewhere-random sources.
It turns out that
 whatever parameter ranges are achievable by condensing mergers can be completely explained by condensers.

In this paper, we study mergers in the {\em extracting} regime: 
where their output is required to be near-uniform. Our main result is a characterization of the seed-length
needed for such extracting mergers. Unlike the tragic case of condensing mergers and their
relationship with condensers, extracting mergers are able to step out of the shadow of extractors, and carve
a niche, albeit small, for themselves.

We also study extracting multimergers, where more random variables out of the given tuple of random variables are required to be uniform and independent. This leads us to a number of interesting combinatorial / geometric questions, for which we give some new and basic combinatorial theorems (such as a partition analogue of Shearer's lemma on projections of a set in a product space).

\subsection{Overview of results}

Our results are best viewed in contrast to the situation with classical extractors and condensers.
An extractor takes a source with some min-entropy and an independent uniform seed, and outputs a nearly-uniform distributed random variable.
A condenser takes a source with some min-entropy and an independent uniform seed, and outputs a source with higher min-entropy-rate.

Both extractors and condensers are functions of the form:
$$F : \zo{n} \times \zo{d} \to \zo{m},$$
where $d$ is the ``seed-length" and $m$ is the ``output-length".

Consider a random source $\mathbf X$ that is $\zo{n}$-valued
and has entropy rate $1/t$ (which means that its min-entropy is $\geq n/t$).

In the case of extractors, for $(1-\epsilon)$-fraction of $j \in \zo{d}$, the output $F(\mathbf{X}, j)$
is required to be $\epsilon$-close in statistical distance to the uniform distribution over $\zo{m}$.
In the case of condensers, for $(1-\epsilon)$-fraction of $j \in \zo{d}$, the output $F(\mathbf{X},j)$
is required to be $\epsilon$-close in statistical distance to some $\zo{m}$-valued random variable
with min-entropy $\geq k'$.

Extractors and condensers are qualitatively very different from the point of view of seed-length. We summarize their salient features below:
\begin{itemize}
\item There are no seedless extractors or condensers.
\item There are condensers with {\bf constant} seed-length $d = O(\log \frac{1}{\epsilon})$ which are {\bf lossless} (we can take $k'$ as large as $\frac{n}{t}+d$), provided $m > k' + \Omega(\log \frac{1}{\epsilon})$.
\item The seed-length required for an extractor
to extract one bit of entropy from a random source $(\zo{n})^t$ is $\log n + 2 \log \frac{1}{\epsilon} + O(1)$.
Furthermore, this seed-length suffices to extract almost all the entropy out of the source.
\end{itemize}

A merger takes in a $t$-part somewhere-random source (which is a special case of a source with entropy rate $\frac{1}{t}$) and an independent uniform seed, and outputs a source with purer randomness. 
This naturally creates two kinds of mergers - condensing mergers and extracting mergers. To the best of our knowledge, only condensing mergers have been studied in the literature, and the (non-constructive) existence results for condensing mergers all follow from the existence results for condensers mentioned above.

Let  $E : (\zo{n})^t \times \zo{d} \to \zo{m}$ be an extracting merger,
namely its output is guaranteed to be $\epsilon$-close to uniform on $\zo{m}$ whenever given
a $t$-part somewhere-random source as input.

\noindent
{\bf Theorem A (Informal):}
{\em We have the following:
\begin{itemize}
\item There are no seedless extracting mergers, even with output length $1$.
\item There are extracting mergers with {\bf constant} seed length $O(\log \frac{1}{\epsilon})$, which can output
a constant number of nearly-uniform bits. 
\item Nevertheless, if the seed length required for an extracting merger
to extract almost all (or even a constant fraction) the entropy out of a somewhere-random source is $\Theta(\log n)$.
\end{itemize}
}

The first item is trivial. The second item is also not difficult, but it already gives a taste of why things are different with extracting mergers. Indeed, randomly-chosen functions are not extracting mergers.
The third item in the above theorem is our main technical result.
It is proved by a second-moment strengthening of the graph-theoretic approach of Radhakrishnan and Ta-Shma to extractors.

\subsection{Projections of partitions}
\label{combintro}

Our study of these questions about randomness extraction leads us to formulate and make progress on a new and natural combinatorial question: the partition analogue of the Shearer/Loomis-Whitney inequalities on volumes of  projections. These questions arise when we consider the problem of extracting randomness from
$t$-part $s$-where random sources (where $s$ out of the $t$ parts of the source are uniform and independent). We call devices that
do this {\em extracting multimergers}. For the rest of this subsection we only focus on the combinatorial aspect.

Let $A$ be an ``nice" subset of the solid cube $[0,1]^3$ with (Lebesgue) volume $\alpha$. Consider the three axis-parallel 2-dimensional projections: 
$\Pi_{XY}(A)$, $\Pi_{YZ}(A)$, $\Pi_{XZ}(A)$.
The Shearer/Loomis-Whitney inequality~\cite{CGFS86, LoomisWhitney} implies that at least one of these three projections has area at least $\alpha^{2/3}$. This is tight, as witnessed by the case where  $A$ is a cube of side-length $\alpha^{1/3}$ (and this is roughly the only such example).

Now consider the following partition variant: Let $A,B$ be ``nice" subsets of $[0,1]^3$ that partition $[0,1]^3$. Consider the six axis-parallel 2-dimensional projections of these two sets:
$\Pi_{XY}(A)$, $\Pi_{YZ}(A)$, $\Pi_{XZ}(A)$ and
$\Pi_{XY}(B)$, $\Pi_{YZ}(B)$, $\Pi_{XZ}(B)$. 
How large can we guarantee that one of them is?

Using the previous inequality and the fact that at least one of $A,B$ has volume at least $1/2$, we get that one of these six 2-dimensional projections has area at least $(1/2)^{2/3} \geq 0.6299$. For this bound to be tight, we would need both $A$ and $B$ to have volume $1/2$, and both $A$ and $B$ to be tight examples for the Shearer/Loomis-Whitney inequality. This would require us to be able to cover $[0,1]^3$ by two cubes of volume $1/2$ -- which is clearly impossible. This suggests that there should be a better bound!

We show, using a delicate study of the sections of the cube and some seemingly lucky inequalities, a tight bound for this problem.

\noindent
{\bf Theorem B (Informal): }\quad Let $A, B$ be ``nice" subsets of $[0,1]^3$ that partition $[0,1]^3$.
Then at least one of the six 2-dimensional projections
$$\Pi_{XY}(A), \Pi_{YZ}(A), \Pi_{XZ}(A), \Pi_{XY}(B), \Pi_{YZ}(B), \Pi_{XZ}(B),$$
has area at least $3/4$.

Such ``projections of partitions'' questions can be formulated in great generality, and apart from Theorem B (whose proof we find very interesting),
we also make some general observations and make some slightly non-trivial progress. We think these are very natural combinatorial questions worthy of
further study. Beyond having connections to mergers, these questions turn out to be related to the KKL and BKKKL theorems/conjectures~\cite{KKL,BKKKL,Friedgut,FHHHZ}
on influences of Boolean functions on the solid cube $[0,1]^n$.
For example, Theorem B implies that any 3-variable Boolean function $f:[0,1]^3 \to \{0,1\}$ has some variable and some bit $b$ such that the ``influence towards $b$" of that variable is at least $1/4$, and this is tight.

Another application of such results is to partition analogues of the Kruskal-Katona theorem. For example, Theorem B implies that for any partition of ${ [n] \choose 3 }$ into two parts, one of the two parts has shadow with size at least $\left(\frac{3}{4} - o(1)\right) {n \choose 2}$.

\subsection{Related work}

Mergers were introduced by Ta-Shma~\cite{TaShma} in his thesis, 
and were used to construct state-of-the-art extractors at the time (these were condensing mergers).
Later, \cite{LRVW} proposed a new condensing merger construction based on 
taking random linear combinations of vectors over finite fields,
and used it in their construction of the first extractors optimal upto
constant factors. This analysis was greatly improved by Dvir~\cite{Dvir09}
through his solution to the finite field Kakeya conjecture. 
Subsequently, \cite{DvirWigderson, DKSS} defined a higher degree polynomial
variant of the \cite{LRVW} merger, and by developing the ideas from~\cite{Dvir09},
were able to construct improved (constant seed) mergers and state-of-the-art extractors. Subsequently \cite{TU} showed how to get analogous explicit constructions of condensers (subsuming the \cite{DKSS} condensing mergers) 
by improving the \cite{GUV} condensers.

Another interesting constant seed condensing merger is by \cite{Raz05}, which was constructed on the way to multi-source extractors.

Our lower bounds for the seed length of extracting mergers are proved by developing ideas from the paper of Radhakrishnan and Ta-Shma~\cite{RTaShma}. A recent beautiful proof of \cite{AGORSO} also achieved a similar result to \cite{RTaShma} in a much cleaner way, but we were not able to adapt this approach to our setting.

Other papers relevant to the study of multimergers are related to resilient functions~\cite{CGGL, CZ19, Meka}.

Finally, our combinatorial results are related to the  KKL and BKKKL theorems/conjectures~\cite{KKL,BKKKL,Friedgut,FHHHZ}
on influences of Boolean functions on the solid cube $[0,1]^n$.

\subsection{Organization}

We give the basic definitions of extracting mergers and extracting multimergers in \Cref{sec2}. In \Cref{sec3} we start with a simple proof that seedless mergers do not exist. This is followed by showing the existence of mergers and multimergers in the extracting regime with constant seed-length.
We prove our lower bound on the seed length of extracting mergers in \Cref{sec4}, which culminates in \Cref{thm:mergerlowerbound}. 
In \Cref{sec5} we explore the connection between {\em seedless} extracting mergers and projections of partition questions. \Cref{sec54} is devoted to proving \Cref{lem-3-2-2parts}, our (optimal) lower bound on partitioning the unit cube into $2$ parts, and \Cref{sec6} is devoted to partitions of the cube into $3$ parts.

\pagebreak 
\section{Sources and Mergers}\label{sec2}

\begin{definition} [$k$-source]
For any $k$, we say that a random variable $X$ is a $k$-source if for all $x$,
 $\Pr[X=x]\leq 2^{-k}$
\end{definition}

\subsection{Somewhere and \texorpdfstring{$s$}{s}-where Random Sources}

\begin{definition}[Somewhere-Random Source]\label{def:somewhere-src}
For a domain $D$, a tuple ${\mathbf X} = (X_1, \ldots, X_t)$ of jointly
distributed $D$-valued
random variables is called a $t$-part somewhere random source if for some
$i \in [t]$, the distribution
 of $X_i$ is uniform over $D$.
 \end{definition}

\begin{definition}[$s$-where Random Source]\label{def:s-wheresrc}
For a domain $D$ and an integer $s > 0$, a tuple ${\mathbf X} = (X_1, \ldots, X_t)$ of jointly distributed $D$-valued
random variables is called a $t$-part $s$-where random source if for some distinct $i_1, \ldots, i_s \in [t]$, the
joint distribution
 of $(X_{i_1}, X_{i_2}, \ldots, X_{i_s})$ is uniform over $D^s$. 
\end{definition}

\subsection{Extracting Mergers and Multimergers}

\begin{definition}[Extracting Mergers]\label{def-ntd-merger}
Let $n, t, d, m$ be integers, and let $\epsilon > 0$.

A function $E: \left(\zo{n}\right)^t \times \zo{d} \to \zo{m}$ is called an 
\underline{$(n,t,d,m,\epsilon)$-extracting merger} if the following holds.

Suppose ${\mathbf X} = (X_1, \ldots, X_t)$ is a somewhere-random source where each $X_i$ is $\zo{n}$-valued.
Then for at least $(1-\epsilon)$-fraction of $j \in \zo{d}$, the distribution of:
 $$ Z = E( {\mathbf X}, j),$$
 is $\epsilon$-close to the uniform distribution on $\zo{m}$.
\end{definition}

We will sometimes refer to these as $\epsilon$-extracting mergers (since $n,d,t,m$ are related to the shape of 
$E$).

\begin{definition}[Extracting Multimergers]\label{def-ex-multimerger}
Let $n,t,s,d,m$ be integers, and let $\epsilon > 0$.

A function $E: \left(\zo{n}\right)^t \times \zo{d} \to \zo{m}$ is called an 
\underline{$(n,d,t,m,\epsilon,s)$-extracting multimerger} if the following holds.

Suppose ${\mathbf X} = (X_1, \ldots, X_t)$ is an $s$-where random source 
where each $X_i$ is $\zo{n}$-valued.
Then for at least $(1-\epsilon)$-fraction of $j \in \zo{d}$, the distribution of:
 $$ Z = E( {\mathbf X}, j),$$
 is $\epsilon$-close to the uniform distribution on $\zo{m}$.
\end{definition}

We will sometimes refer to these as $(\epsilon,s)$-extracting multimergers (since $n,d,t,m$ are related to the shape of 
$E$).

Observe that the $s = 1$ case in the above definition corresponds to extracting mergers.  

\paragraph{Note on the definitions} In all our definitions, we chose to define the ``strong" versions (where the output bits are required to be independent of the seed) for simplicity. In fact, our existence result for mergers is for the strong version, and our impossibility result is for the weak version.


\section{Simple results about extracting mergers}\label{sec3}

For the rest of this paper, we only talk about {\em extracting} (not condensing) mergers and multimergers.

\subsection{Seedless Mergers do not exist}\label{sec31}

We begin with the simple observation that there are no seedless 
extracting mergers.
\begin{theorem}(There are no seedless mergers)
Let $n$ be an integer and $\varepsilon < 1/2$. There does not exist a function $M: \zo{n}\times \zo{n} \to \zo{}$ that is an $\varepsilon$-merger.
\end{theorem}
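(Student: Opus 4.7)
The plan is to view $M$ as a $\{0,1\}$-valued matrix indexed by $\{0,1\}^n \times \{0,1\}^n$, derive that every row must be nearly balanced (in particular must contain at least one $1$), and then build an adversarial somewhere-random source that forces the output of $M$ to be identically $1$.

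First I would extract a row-balance condition from the definition. For any fixed $x^\ast \in \{0,1\}^n$, consider the pair $(X_1, X_2) = (x^\ast, U)$ where $U$ is uniform on $\{0,1\}^n$. Since the second marginal is uniform, this is a somewhere-random source in the sense of \Cref{def:somewhere-src} (the identity of the ``good'' index is not required to be known, and arbitrary correlation is allowed between the parts). The merger guarantee therefore forces $M(x^\ast, U)$ to be $\varepsilon$-close to uniform on $\{0,1\}$, so
\[
p(x^\ast) \;:=\; \Pr_{y}[M(x^\ast,y)=1] \;\in\; [\tfrac12 - \varepsilon,\, \tfrac12 + \varepsilon].
\]
Since $\varepsilon < 1/2$, we have $p(x^\ast) > 0$ for every $x^\ast$, i.e.\ every row of $M$ contains at least one $1$.

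Next I would build the adversarial source. Using the previous step, pick for each $x$ some $f(x) \in \{0,1\}^n$ with $M(x, f(x)) = 1$; this defines a function $f : \{0,1\}^n \to \{0,1\}^n$. Let $U$ be uniform on $\{0,1\}^n$ and set $(X_1, X_2) := (U, f(U))$. The first marginal is uniform, so $(X_1,X_2)$ is again a somewhere-random source (the two parts are perfectly correlated, which the definition permits). By construction, however, $M(X_1, X_2) = M(U, f(U)) \equiv 1$, a distribution at statistical distance $1/2$ from uniform on $\{0,1\}$. Since $\varepsilon < 1/2$, this contradicts the merger property, completing the proof.

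There is no real obstacle here; the only conceptual point to flag is that the somewhere-random hypothesis permits arbitrary correlation between the parts, so the ``diagonal'' source $(U, f(U))$ is a legitimate input to $M$. All the power of the argument comes from combining a pointwise constraint (row balance, which uses a somewhere-random source of the form $(x^\ast, U)$) with a global adversarial choice (the source $(U, f(U))$) that exploits freedom in the correlation structure.
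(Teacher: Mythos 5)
Your proof is correct and takes essentially the same approach as the paper: extract a ``full-support'' property for one-dimensional slices of $M$ by feeding in a somewhere-random source with one coordinate fixed and the other uniform, then defeat $M$ with a diagonal source $(U, f(U))$ built from a selector $f$ that realizes the bad output on every slice. The paper phrases the first step via constant functions $f_y$ applied to $(X, f_y(X))$ rather than directly as $(x^\ast, U)$, and targets columns/output $0$ where you target rows/output $1$, but these are symmetric cosmetic differences.
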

\begin{proof}
	Fix an $\varepsilon < 1/2$. Assume for the sake of contradiction there exists an $\varepsilon$-merger $M: \zo{n} \times \zo{n}  \to  \zo {} $. 
	
	In particular, this means for {\em every} function $f: \zo{n} \to \zo{n}$, when  $X$ is distributed uniformly over $\zo{n}$, the distribution of $ M(X, f(X))$ is $\varepsilon$-close to uniform
on $\zo{}$ -- and in particular, it has full support on $\zo{}$. We will now demonstrate a function $g:\zo{n} \to \zo{n}$ such that $M(g(Y), Y)$ is constant for uniformly distributed $Y$, thus contradicting the merger assumption.

Fix any $y\in \zo{n}$. Consider the constant function $f_y: \zo{n} \to \zo{n}$ given by $f_y(x) = y$ for all $x$. By our hypothesis above, the distribution of $M(X,f_y(X))$ has full support $\zo{}$.
Thus there exists $x \in \zo{n}$ such that $M(x,y) = 0$. Pick one such $x$ and call it $g(x)$.

Thus we have $M(g(y), y) = 0$ for all $y \in \zo{n}$. We conclude that for uniform $Y \in \zo{n}$, $M(g(Y), Y) = 0$, which is the desired contradiction.
\end{proof}

\subsection{Extracting mergers with constant seed exist}\label{sec32}

We now show that constant seed extracting mergers with constant output length exist. While the proof is quite simple, it is interesting because (1) constant seed extractors do not exist, (2) a random choice of $E:(\zo{n})^t\times\zo{d} \to \zo{m}$ does not give a constant seed extracting mergers, and most importantly (3) as we will later see, the seed length still needs to be superconstant to produce a superconstant number of output bits, as we will see in the next section.

\begin{theorem}\label{thm:mergerupperbound}
Let $n,t$ be integers and $\epsilon > 0$.

Then for any integer $m \leq n $, setting:

$$d = \log m + \log (t-1) + 2 \log \frac{1}{\epsilon} + O(1),$$

there exists a function $E: \left(\zo{n}\right)^t \times \zo{d} \to \zo{m}$
that is an $\varepsilon$-extracting merger.
\end{theorem}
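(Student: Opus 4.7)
The plan is to reduce to the existence of ordinary strong seeded extractors on a universe small enough to make a probabilistic counting argument go through, by first discarding the ``excess'' of each $X_j$ via truncation. Set $k := m + 2\log(1/\epsilon) + O(1)$, and for each $j \in [t]$ let $Y_j \in \zo{k}$ denote the first $k$ coordinates of $X_j$. The point is that truncation preserves the somewhere-random structure: whenever some $X_i$ is uniform on $\zo{n}$, the corresponding $Y_i$ is uniform on $\zo{k}$, so $(Y_1,\ldots,Y_t) \in \zo{kt}$ has min-entropy at least $k$ as a distribution on the smaller product space.

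Next, invoke the classical probabilistic existence theorem for strong seeded extractors (going back to Radhakrishnan--Ta-Shma): for any $N, k, \epsilon, m$ with $m \leq k - 2\log(1/\epsilon) - O(1)$, there is a strong $(k,\epsilon)$-extractor $F : \zo{N} \times \zo{d} \to \zo{m}$ of seed length $d = \log(N-k) + 2\log(1/\epsilon) + O(1)$. Instantiating at $N = kt$ gives $d = \log\bigl((t-1)k\bigr) + 2\log(1/\epsilon) + O(1)$, and substituting $k = m + 2\log(1/\epsilon) + O(1)$ collapses this to $\log(t-1) + \log m + 2\log(1/\epsilon) + O(1)$ in the regime $m \geq \log(1/\epsilon)$; the small-$m$ regime is handled by extracting $\max(m, 2\log(1/\epsilon))$ bits with $F$ and then truncating, which only affects the additive $O(1)$. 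Finally define
\[
E\bigl((X_1,\ldots,X_t),\, s\bigr) \;:=\; F\bigl((Y_1,\ldots,Y_t),\, s\bigr).
\]
Since $(Y_1,\ldots,Y_t)$ is a $k$-source on $\zo{kt}$, the strong-extractor property of $F$ immediately yields that $E$ is an $\epsilon$-extracting merger with the claimed seed length.

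There is no serious technical obstacle in this plan --- the heavy lifting is done by the classical strong extractor existence theorem, itself proved by a standard probabilistic union bound over flat $k$-sources. What deserves emphasis, as the authors themselves flag just before the statement, is that the merger is \emph{not} built by picking $E$ uniformly at random: a uniformly random $E : (\zo{n})^t \times \zo{d} \to \zo{m}$ is defeated by the sheer number of adversarial somewhere-random sources once $n$ is large compared to the seed. The truncation step is precisely what sidesteps this --- after projecting onto a window of size $k = O(m + \log(1/\epsilon))$, the effective universe $\zo{kt}$ is small enough for the usual probabilistic argument to succeed and yield an extractor with constant seed whenever $m, t, 1/\epsilon$ are constant.
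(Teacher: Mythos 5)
Your proposal is essentially the same as the paper's: truncate each coordinate of the somewhere-random source to a short prefix (which preserves the somewhere-random structure, hence gives a $k$-source on a small domain), then feed the truncated tuple into a nonconstructively-existing strong seeded extractor. The only difference is that you truncate to $k = m + 2\log(1/\epsilon) + O(1)$ bits whereas the paper truncates to $m$ bits; your version is the more careful one, since a strong $(k,\epsilon)$-extractor can only output $k - 2\log(1/\epsilon) - O(1)$ bits, so the paper's claim of extracting $m$ bits from an $m$-source is slightly loose, and your extra additive padding (absorbed into the theorem's $O(1)$ seed term) is exactly what fixes it.
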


Thus with $O(\log t + \log\frac{1}{\epsilon})$ bits of seed, we can extract $\poly(\frac{1}{\epsilon})$ bits out.

\begin{proof}
We want to get an extracting merger $E((x_1, \ldots, x_t), j)$, where
the $x_i \in \zo{n}$ and $j \in \zo{d}$.

 The nature of a somewhere-random source is that applying a
 truncation to
 each element of the source yields a smaller somewhere-random source.
 The idea of our extracting merger is to truncate
 our somewhere-random source, and to then apply a standard
 seeded extractor to
 the entire truncated source. The truncation makes the instance
 size smaller, enabling us to use a reduced seed length in the extractor.

 We truncate each $x_i$ to the first $m$ bits, thus obtaining
 $x_1', \ldots, x_t' \in \zo{m}$.

We can verify that our truncation to the first $m$ bits produces a source $(X_1', \ldots, X_t')$ of length $mt$ and min-entropy $m$.  By the standard result on existence of extractors (See Theorem 6.14 in~\cite{Vadhan}), there
 exists a strong $(m, \epsilon)$-extractor
 $Ext_0 : \zo{m t} \times \zo{d} \to \zo{m}$ with seed length $d= \log m + \log(t-1) + 2 \log \frac{1}{\epsilon} + O(1)$.

We can thus define the function $E: (\zo{n})^t \times \zo{d} \to \zo{m}$:
$$E((x_1, \ldots, x_t), j) =  Ext_0((x_1', \ldots, x_t'), j).$$
Observe that the function $E$ is
 an $\epsilon$-extracting merger that
 uses a seed $j$ of length $d$ and outputs $m$ bits as required.
\end{proof}

In contrast, a random $E: (\zo{n})^t \times \zo{d} \to \zo{}$ is not an extracting merger at all!
To see this, it suffices to fix $t=2$. 
If $E$ is chosen at random, then for every $j \in \zo{d}$ and $x \in \zo{n}$, it is very likely that there
exists a $y \in \zo{n}$ such that $E((x,y),j) = 0$. Define $f_j:\zo{n} \to \zo{n}$ by $f_j(x)$ = any such $y$.
Then for every $j \in \zo{d}$, $E(X, f_j(X), j)$ is constant when $X$ is picked uniformly at random, showing that
$E$ is not a merger.

\subsection{Extracting Multimergers}\label{sec33}

Using the same idea, we also get interesting multimergers.

\begin{theorem}
\label{thm:multimergerupperbound}
Let $n,t,s$ be integers with $s < t$, and $\epsilon > 0$.
Then for any integer $a \leq n$, setting $m = s \cdot a$ and:
$$ d = \log a + \ 2 \log \frac{1}{\epsilon} + \log(t-s) + \Omega(1),$$
there exists a function $E: \left(\zo{n}\right)^t \times \zo{d} \to \zo{m}$
that is an $(\varepsilon,s)$-extracting multimerger.
\end{theorem}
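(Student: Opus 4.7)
The plan is to mimic the proof of \Cref{thm:mergerupperbound} almost verbatim, replacing the standard extractor for sources of min-entropy $m$ with a standard extractor for sources of min-entropy $sa$. The construction is truncate-then-extract: given $(x_1,\ldots,x_t) \in (\zo{n})^t$, let $x_i' \in \zo{a}$ be the first $a$ bits of $x_i$, and define
$$E((x_1,\ldots,x_t), j) = \mathrm{Ext}_0\!\left((x_1', \ldots, x_t'), j\right),$$
where $\mathrm{Ext}_0 : \zo{ta} \times \zo{d} \to \zo{m}$ is a strong $(sa, \epsilon)$-extractor supplied by the standard probabilistic existence theorem (e.g.\ Theorem 6.14 of \cite{Vadhan}).

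The key structural observation is that coordinate-wise truncation preserves the $s$-where random property. If $\mathbf X = (X_1,\ldots,X_t)$ is an $s$-where random source with some tuple $(X_{i_1}, \ldots, X_{i_s})$ jointly uniform on $(\zo{n})^s$, then projecting each $X_{i_\ell}$ onto its first $a$ bits gives a tuple $(X_{i_1}', \ldots, X_{i_s}')$ that is jointly uniform on $(\zo{a})^s$, since marginals of the uniform distribution are uniform. Therefore the concatenated string $(X_1', \ldots, X_t') \in \zo{ta}$ has min-entropy at least $sa = m$.

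Now I invoke the existence of strong extractors with source length $N = ta$ and min-entropy threshold $k = sa$. The standard bound says such an extractor exists with seed length
$$d = \log(N - k) + 2 \log \tfrac{1}{\epsilon} + O(1) = \log\!\bigl((t-s) a\bigr) + 2 \log \tfrac{1}{\epsilon} + O(1) = \log a + \log(t-s) + 2 \log \tfrac{1}{\epsilon} + O(1),$$
matching the statement. Strongness of $\mathrm{Ext}_0$ means that for a $(1-\epsilon)$-fraction of seeds $j \in \zo{d}$, the output $\mathrm{Ext}_0((X_1',\ldots,X_t'), j)$ is $\epsilon$-close to uniform on $\zo{m}$, which is exactly the requirement in \Cref{def-ex-multimerger}.

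There is no real obstacle: the only non-routine step is noticing that truncation preserves $s$-wise joint uniformity (and hence min-entropy $sa$ of the concatenation), and this is immediate. The content of the theorem is really that, after truncation, an $s$-where random source behaves like a generic min-entropy $sa$ source in a universe of size $ta$, so the seed savings over a black-box extractor on the original $(\zo{n})^t$ input come entirely from shrinking $n$ down to the output-length parameter $a$.
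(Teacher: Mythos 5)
Your proposal is correct and matches the paper's approach exactly: the paper does not even spell out a separate proof for this theorem, remarking only that it follows by ``the same idea'' as Theorem~\ref{thm:mergerupperbound}, namely truncate each coordinate and feed the concatenation to a generic strong extractor. You correctly identify the one non-routine observation --- that coordinate-wise truncation preserves the $s$-where random property, so the concatenated truncated source has min-entropy at least $sa$ in a universe of size $ta$ --- which is precisely what drives the seed-length bound.
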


Taking for example $s = t-1$ and $a = \poly\left(\frac{1}{\epsilon}\right) \ll n$, 
we get that by investing $O(\log \frac{1}{\epsilon})$ bits of seed, we can extract 
$\poly\left(\frac{1}{\epsilon}\right) \cdot t$ bits of randomness from any
$t$-part $(t-1)$-where random source $\mathbf X \in \left(\zo{n}\right)^t$.

In this setting of parameters, the seed length does not even depend on $t$, and we could take $t$
to be growing superconstantly while preserving constant seed-length.

\subsection{Seedless Multimergers}\label{sec34}

Our final observation of this section is that for multimergers with large $t$ and where $s$ is a large fraction of $t$, seedless
multimergers with small error {\em do} exist. Indeed, if $s = t-1$, and we define
$E: (\zo{n})^t \to \zo{}$ by
$$E(x_1, \ldots, x_t) = Maj(x_{11}, x_{21}, \ldots, x_{t1}),$$
it is easy to see that $E$ is a seedless $(\epsilon, t-1)$-multimerger for $\epsilon = O(\frac{1}{\sqrt{t}})$.
Replacing $E$ with any resilient function gives other examples of seedless multimergers (including with larger output size).

Investigation of this phenomenon leads us to the projections of partitions question, and we explicitly give the connection and some
results about it in a later section. Nevertheless, this seems like the tip of an iceberg.

\section{Mergers with large output need large seed}\label{sec4}

In this section we show a lower bound on the seed-length for $2$-source
extracting mergers, essentially showing that the dependence of the 
seed-length for extracting mergers in Theorem~\ref{thm:mergerupperbound}
on $m$ and $\epsilon$ is tight. 

\begin{theorem}
\label{thm:mergerlowerbound}
Let $\epsilon < 1/40$.
Let $E: \left(\zo{n}\right)^2 \times \zo{d} \to \zo{m}$ be a
$\varepsilon$-extracting merger. 

Then for $\epsilon \geq 2^{-\Omega(m)}$, we have: 
$$d \geq \log m + \log \frac{1}{\epsilon} - O(1).$$
and for $\epsilon < 2^{-\Omega(m)}$, we have:
$$d \geq \Omega(m).$$
\end{theorem}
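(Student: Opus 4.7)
My plan is to adapt the graph-theoretic extractor lower bound of Radhakrishnan--Ta-Shma to the merger setting, strengthening it with a second-moment analysis. For a candidate $\epsilon$-extracting merger $E$, set $E_j(x_1,x_2) := E((x_1,x_2),j)$ and $S_{j,z} := E_j^{-1}(z) \subseteq \zo{n} \times \zo{n}$. The merger hypothesis is equivalent to the following functional-source statement: for every $f:\zo{n}\to\zo{n}$, writing $p_{j,z}(f) := \Pr_{X}[(X,f(X)) \in S_{j,z}]$ with $X$ uniform, at most an $\epsilon$-fraction of seeds $j$ give $(p_{j,z}(f))_z$ that is $\epsilon$-far in total variation from uniform on $\zo{m}$. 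Such $f$-sources are the only ones I will use, since they already exhaust the adversary's power in the weak regime where the impossibility result is proved.

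The strategy is probabilistic: I would choose $f:\zo{n}\to\zo{n}$ uniformly at random and show that it violates the merger hypothesis unless $d$ is large. For fixed $(j,z)$, $p_{j,z}(f)$ is the average of $2^n$ independent Bernoullis indexed by $x$, with success probability equal to the row-density $|S_{j,z}\cap(\{x\}\times\zo{n})|/2^n$, so its mean is $|S_{j,z}|/2^{2n}$ and its variance has a clean explicit form. First, I would invoke the merger on the (trivially somewhere-random) source $(X_1,X_2)$ with both parts independent uniform, which pins $|S_{j,z}|/2^{2n}$ close to $2^{-m}$ on most $(j,z)$ and hence nails down the first moment of $p_{j,z}(f)$. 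Next I would compute the second moment of the $\ell_2^2$-deviation $\Delta(j,f) := \sum_z (p_{j,z}(f) - 2^{-m})^2$, which reduces to fourth-moment estimates on the underlying Bernoullis, and apply Paley--Zygmund to convert $\E_f[\Delta(j,f)]$ into an anti-concentration bound: for most seeds $j$, a random $f$ achieves $\Delta(j,f) \geq 4\epsilon^2/2^m$ with constant probability, which by the Cauchy--Schwarz bound $\text{TV}^2 \leq 2^m \Delta / 4$ guarantees $\text{TV} > \epsilon$.

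Combined with a double-counting argument on $(j,f)$-pairs, this contradicts the merger condition unless $2^d \gtrsim m/\epsilon$, yielding $d \geq \log m + \log(1/\epsilon) - O(1)$ in the regime $\epsilon \geq 2^{-\Omega(m)}$. In the opposite regime the target $\ell_2^2$-deviation $4\epsilon^2/2^m$ drops below the intrinsic variance scale of $p_{j,z}(f)$ unless $2^d = 2^{\Omega(m)}$, giving the second clause of the theorem.

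The main obstacle I anticipate is the joint correlation structure of $p_{j,z}(f)$: the variables sum to $1$ across $z$ for each $(j,f)$, and share the random $f$ across $j$. A first-moment Markov bound on $\Delta$ would lose a factor of $\log(1/\epsilon)$ in $d$; the second-moment computation is needed both to upgrade an expectation bound into a positive-probability event for the adversary and to pin down the transition between the two regimes. Organizing the fourth-moment calculation so that the row-structure of $S_{j,z}$ enters cleanly — and then aggregating the resulting per-$j$ anti-concentration over all $2^d$ seeds without being defeated by correlations — is where I expect the technical weight of the argument to lie.
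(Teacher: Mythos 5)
Your reduction to functional sources $(X, f(X))$ and the second-moment theme are both aligned with the paper's approach, but two steps in your plan break. First, the Cauchy--Schwarz step runs the wrong way: the inequality $\mathrm{TV}^2 \leq 2^m\Delta/4$ (with $\Delta = \|p-u\|_2^2$) gives an \emph{upper} bound on total variation in terms of $\Delta$, so a lower bound $\Delta \geq 4\epsilon^2/2^m$ tells you nothing about whether $\mathrm{TV} > \epsilon$. To deduce large TV from an $\ell_2^2$ bound you would need either the much stronger $\Delta \geq 2\epsilon$ (via $\|v\|_1 \geq \|v\|_2^2$ when $\|v\|_\infty \leq 1$) or an $\ell_\infty$ control on the deviation; a random $f$ supplies neither. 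Second, and more fundamentally, a uniformly random $f:\{0,1\}^n\to\{0,1\}^n$ is far too weak an adversary: $p_{j,z}(f)$ is an average of $2^n$ independent row-indicators, so $\mathrm{Var}_f[p_{j,z}(f)] = O(2^{-n})$, giving $\E_f[\Delta(j,f)] = O(2^m/2^n)$ plus the independent-source bias. This vanishes as $n\to\infty$ at fixed $m,\epsilon$, and Paley--Zygmund can only recover the same exponentially small scale. Since the theorem must hold for arbitrarily large $n$, a randomly drawn $f$ cannot produce the required $\Omega(\epsilon)$-scale deviation.

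The paper's argument differs precisely where yours fails: rather than randomizing $f$, it randomizes a small \emph{set} $S \subseteq [M]$ of output symbols and then lets the adversary \emph{construct} $f$ to steer $E(f(y),y,\cdot)$ entirely away from $S$ whenever some $x$ permits it. The second moment is applied not to the output distribution but to the count of left-vertices $x$ that totally miss a random $S$: Chebyshev forces this count to be zero with noticeable probability, which in turn forces many pairs of left-vertices to share a right-neighbor (pairs with disjoint neighborhoods contribute nonpositively to the variance), and a degree-pruning step then converts this into a seed-length lower bound. The missing idea in your plan is that the adversarial $f$ must be built from a random target $S$, not itself chosen at random; the randomness lives in the target, and the adversary exploits it deterministically.
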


For the proof of this theorem, the representation of the inputs and output
of $E$ in terms of bits
is a distraction. So, letting $N = 2^n$, $D=2^d$, $M= 2^m$ and identifying
$\zo{n}, \zo{d}, \zo{m}$ with $[N], [D], [M]$ respectively, we will
view $E$ as a function $E : [N]^2 \times [D] \to [M]$.

Recalling the $\epsilon$-extracting merger property, we have that $E$
is such that whenever $X,Y$ are jointly distributed $[N]$-valued random
variables, with at least one of them uniformly distributed, 
and $J$ is picked uniformly from $[D]$ and independently of $(X,Y)$,
then the distribution of $E((X,Y), J)$ is $\epsilon$-close to the uniform
distribution on $[M]$.

We will show that for $\epsilon \geq M^{-\Omega(1)}$, we have: 
$$D \geq \Omega\left(\frac{1}{\epsilon}\log M\right),$$
and for $\epsilon < M^{-\Omega(1)}$, we have:
$$D \geq \Omega\left(M^{\Omega(1)}\right) .$$
 
 Our proof is based on the following idea.
Consider a uniformly random subset $S \subseteq [M]$ of size $\lambda M$.
For each $y \in [N]$, we look for an $x$ such that for all
$j \in [D]$, $E(x,y,j) \not\in S$. If there is such an $x$, 
then we define $g(y) = x$. If such an $x$ exists for most $y$, then 
for uniformly chosen $Y \in [N], J \in [D]$,
we have $\Pr_{Y,J}[E(g(Y),Y,J) \in S] \ll \lambda - \epsilon$, contradicting the merger property. Thus for most $S$, for many $y$ there is no such $x$; namely, for most $S$, for many $y$, for all $x$, there is some $j$, such that
$E(x,y,j) \in S$. For this to happen for even one $y$ turns out to be very abnormal, and we derive our lower bound on $D$ by digging into its structure. This part uses a second moment variation of the Radhakrishnan-TaShma~\cite{RTaShma} approach to extractor lower bounds.

 \subsection{Abnormal conductors}\label{sec41}

A map $C : [N] \times [D] \to [M]$ is called a conductor (this is a general term capturing the shape of seeded extractors and seeded condensers). We will also view this as a bipartite multigraph with $[N]$ on the left, $[M]$ on the right and $D$ labelled edges coming out of every left vertex.
 
 If $C$ is chosen at random, then for most $S \subseteq [M]$ of size $\lambda M$ and most $x \in [N]$, we expect about $\lambda$ fraction of the edges coming out of $x$ to land in $S$. But we do not expect that this will happen for {\em all} $x$! When $C$ is chosen at random, then for most $S$ there will be some $x \in [N]$ for which a very small $( \ll \lambda)$ fraction of edges coming out of $x$ lie in $S$. We capture this with the following definition.

 \begin{definition} 
 Let $C: [N] \times [D] \to [M]$ be a conductor.
 Let $S$ be a subset of $[M]$.
 
 We say the vertex $x \in [N]$ {\em totally misses} $S$ (under $C$) if 
 $$\left|\{j \in [D] \mid C(x,j) \in  S\}\right| = 0.$$ 
 
 We say the vertex $x \in [N]$ {\em mostly misses} $S$ (under $C$) if 
 $$\left|\{j \in [D] \mid C(x,j) \in S\}\right| <  \frac{1}{2} \frac{|S|}{M} D.$$ 
 \end{definition}
 
 \begin{definition}[Abnormal conductors]
 Let $C: [N] \times [D] \to [M]$ be a conductor.
 
We say that $C$ is $(\gamma, \lambda)$-abnormal if 
 $$ \Pr_{S \in {[M] \choose \lambda M } } \left[ \exists x \in [N] \mbox{ s.t. } x \mbox{ mostly misses } S \right] < 1- \gamma.$$
 \end{definition}

 \begin{lemma}[Extracting mergers contain abnormal conductors]\label{lem:findabnormal}
  Suppose $0 < \gamma < \frac{\lambda}{2} - \epsilon$. 
  Suppose $E: [N]^2 \times [D] \to [M]$ is an $\epsilon$-extracting merger.
  For $y \in [N]$, let $E_y: [N] \times [D] \to [M]$ be the function $E( \cdot, y, \cdot)$. Then for some $y \in [N]$,
  $E_y$ is $(\gamma, \lambda)$-abnormal.
 \end{lemma}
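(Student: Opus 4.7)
My plan is to prove the contrapositive. Assume that \emph{every} $E_y$ fails to be $(\gamma, \lambda)$-abnormal, so that for every $y \in [N]$,
\[
\Pr_{S \in \binom{[M]}{\lambda M}}\bigl[\exists\, x \in [N] \text{ mostly missing } S \text{ under } E_y\bigr] \;\geq\; 1-\gamma.
\]
From this I will construct a single somewhere-random source on which $E$ lands in a prescribed small set $S$ far less often than $|S|/M$, contradicting the $\epsilon$-extracting merger property.

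The first move is to swap the order of quantification. Since the event ``$\exists x$ mostly missing $S$ under $E_y$" is a Boolean function of $(y,S)$, averaging the above inequality over $y$ and interchanging expectations gives
\[
\E_{S}\Bigl[\tfrac{1}{N}\bigl|\{y : \exists x \text{ mostly missing } S \text{ under } E_y\}\bigr|\Bigr] \;\geq\; 1-\gamma,
\]
so I may fix a specific $S \subseteq [M]$ of size $\lambda M$ for which the set of ``good" $y$ (those with a witness $x$ mostly missing $S$ under $E_y$) has density at least $1-\gamma$. For each good $y$, pick such a witness $g(y) \in [N]$; for the remaining $y$ set $g(y)$ arbitrarily. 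Now take $Y$ uniform on $[N]$ and consider the pair $(g(Y), Y)$: this is a valid somewhere-random source because its second coordinate is uniform by construction.

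I then estimate $\Pr_{Y,J}[E(g(Y), Y, J) \in S]$ two ways. Applying the $\epsilon$-merger guarantee to the source $(g(Y), Y)$ with independent uniform seed $J$ makes the output $\epsilon$-close to uniform on $[M]$, so the probability of landing in $S$ is at least $|S|/M - \epsilon = \lambda - \epsilon$. On the other hand, for each good $y$ the defining inequality of ``mostly misses" yields $\Pr_J[E(g(y), y, J) \in S] < \lambda/2$, while for the at most $\gamma N$ bad $y$ this probability is trivially bounded by $1$. Averaging over $y$ gives $\Pr_{Y,J}[E(g(Y), Y, J) \in S] \leq \lambda/2 + \gamma$. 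Combining the two bounds forces $\gamma \geq \lambda/2 - \epsilon$, contradicting the hypothesis $\gamma < \lambda/2 - \epsilon$.

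The only step requiring real thought is recognizing the asymmetry that makes the construction work: ``mostly missing" is a property of the graph $E_y$ with $y$ fixed, which is precisely what lets me put $g(Y)$ in the first coordinate of a source whose \emph{second} coordinate $Y$ is uniform. No additional work is needed to control the joint distribution of $(g(Y), Y)$ beyond the fact that the marginal of $Y$ is uniform, since the somewhere-random definition only constrains the marginal of one coordinate. Everything else is averaging and the merger definition; I do not expect further obstacles.
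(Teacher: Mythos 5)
Your proof is correct and follows essentially the same argument as the paper: negate the conclusion, average the abnormality-failure probability over $y$ to find a single set $S_0$ that is ``mostly missed'' by some $x$ for at least a $(1-\gamma)$ fraction of $y$, build the adversarial function $g$, and then play the two estimates of $\Pr_{Y,J}[E(g(Y),Y,J)\in S_0]$ against each other to contradict $\gamma < \lambda/2 - \epsilon$. The paper phrases it as a direct contradiction rather than a contrapositive and writes the upper bound as $\frac{\lambda}{2}(1-\gamma)+\gamma$ rather than $\frac{\lambda}{2}+\gamma$, but these are cosmetic differences.
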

 \begin{proof}
  Suppose not; namely that for all $y \in [N]$, we have that $E_y$ is not $(\gamma, \lambda)$-abnormal.
  
  Pick $S \in {[M] \choose \lambda M}$ uniformly at random.
  
  Let $B_y$ be the event that there exists some $x \in [N]$ that mostly misses $S$ under $E_y$.
  
  By our assumption, $\Pr[B_y] \geq 1-\gamma$.
  So the expected number of $y$ for which $B_y$ happens is at least $(1-\gamma)N$.
  
  Thus there exists some particular choice of $S$ for which $B_y$ happens for
  at least $(1-\gamma)N$ many $y$s.
  Call this choice  $S_0$. Define $f: [N] \to [N]$ by defining $f(y)$ as follows:
  $$ f(y) =
  \begin{cases} \mbox{ any $x$ that mostly misses $S_0$ under $E_y$} & B_y \mbox{ happened,}\\
   \mbox{ arbitrary } & B_y \mbox{ did not happen}.
  \end{cases}
  $$
  
  Then $$\Pr_{Y \in [N], J \in [D]}[ E( f(Y), Y, J) \in S_0 ] < \frac{\lambda}{2} (1-\gamma) + \gamma < \lambda - \epsilon.$$
But $|S_0| = \lambda M$, and thus we get a contradiction to the $\epsilon$-extracting merger property of $E$. This completes the proof.
 \end{proof}

 \subsection{The structure of abnormal conductors}\label{sec42}

 The previous lemma gave us a $y$ for which $E_y$ is abnormal. 
 We now use show that abnormal conductors are very structured, and thus get a lower bound on $D$.

 \begin{lemma}
 \label{lem:abnormalstructure}
  Let $C: [N] \times [D] \to [M]$ be a $(\gamma, \lambda)$-abnormal conductor.
  Suppose $10 \epsilon < \lambda < \frac{1}{2}$.

  Suppose that for $X \in [N]$ and $J \in [D]$ picked uniformly and independently, $C(X, J)$ is $\epsilon$-close to the uniform distribution on $[M]$.
  Then
  $$D \geq \min\left\{\Omega\left(\frac{1}{\lambda} \log(\lambda \gamma M) \right), \Omega\left(\lambda \gamma M)^{1/4}\right)\right\}.$$
  \end{lemma}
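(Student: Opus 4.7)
My plan is to apply a second-moment argument to the random variable $W(S) := |\{x \in [N] : x \text{ mostly misses } S\}|$, where $S$ is drawn uniformly from $\binom{[M]}{\lambda M}$. The abnormality hypothesis rewrites as $\Pr[W(S) \geq 1] \leq 1-\gamma$, which by Cauchy--Schwarz applied to $W = W \cdot \mathbf{1}[W \geq 1]$ yields $(\E[W])^2 \leq (1-\gamma)\,\E[W^2]$, equivalently $\Var(W) \geq \tfrac{\gamma}{1-\gamma}(\E[W])^2$. I will derive a contradiction for $D$ below the claimed threshold by upper bounding $\Var(W)$ in terms of $\E[W]$ and the structure of $C$, using the $\epsilon$-closeness of $C(X,J)$ to uniform.

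For the first moment, I lower bound $p_x := \Pr[x \text{ mostly misses } S]$ by $\Pr[d_x(S) = 0] = \Pr[S \cap T_x = \emptyset]$, where $T_x \subseteq [M]$ is the set of distinct neighbors of $x$ (with $|T_x| \leq D$). A hypergeometric estimate gives $p_x \geq (1-\lambda)^{|T_x|} \geq (1-\lambda)^D \geq 2^{-2\lambda D}$, hence $\E[W] \geq N \cdot 2^{-2\lambda D}$. For the variance, I split $\Var(W) = \sum_x p_x(1 - p_x) + \sum_{x \neq x'} \mathrm{Cov}(X_x, X_{x'})$ with $X_x := \mathbf{1}[x \text{ mostly misses } S]$. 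The diagonal part is at most $\E[W]$. For the covariances, I bound $\Pr[\text{both}]$ by $\Pr[S \cap (T_x \cup T_{x'}) = \emptyset] = (1-\lambda)^{|T_x| + |T_{x'}| - i_{x,x'}}$ with $i_{x,x'} := |T_x \cap T_{x'}|$, giving $\mathrm{Cov}(X_x, X_{x'}) \lesssim \lambda\, i_{x,x'}\, p_x p_{x'}$ for small $\lambda$. Swapping summations produces a total off-diagonal contribution of order $\lambda \sum_{z \in [M]} \tau_z^2$ where $\tau_z := \sum_{x : z \in T_x} p_x$; the $\epsilon$-closeness hypothesis enters here, by forcing the right-degree sequence $(\rho_z)$ of $C$ to be nearly balanced and thereby controlling $\sum_z \tau_z^2$ (via $\tau_z \leq \sigma_z \cdot \max_x p_x$ and a bound on $\sum_z \sigma_z^2$ coming from $\sum_z \rho_z^2 \approx (ND)^2/M$).

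Combining these two bounds gives an inequality of the form $\gamma(\E[W])^2 \leq \E[W] + O(\lambda \cdot (\text{sum of squared set-degrees}))$, which after substituting $\E[W] \geq N \cdot 2^{-2\lambda D}$ and the $\epsilon$-closeness bound forces a lower bound on $D$. The two branches of the $\min$ in the conclusion correspond to which side of the variance upper bound dominates. When the diagonal $\E[W]$ term dominates, the inequality collapses to a first-moment-style bound, yielding $D \geq \Omega(\tfrac{1}{\lambda}\log(\lambda\gamma M))$ (using the implication $N \gtrsim M/D$ coming from $\epsilon$-closeness to uniform on $[M]$); when the off-diagonal overlap term dominates, a more delicate polynomial balance between the exponential $2^{-4\lambda D}$ and the sum-of-squares bound yields $D \geq \Omega((\lambda\gamma M)^{1/4})$.

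The main technical obstacle is the second-moment estimate for the off-diagonal contribution. A naive Chebyshev bound on $\Pr[d_x(S) + d_{x'}(S) < \lambda D]$ turns out to be too loose, so one must decompose pair probabilities carefully by the overlap $|T_x \cap T_{x'}|$ and track the resulting sums against the constraint $\sum_z \rho_z = ND$. The critical step is translating the $L_1$ deviation guarantee from $\epsilon$-closeness into a usable $L_2$ (sum-of-squares) bound on the right-degree sequence, possibly through auxiliary control of $\max_z \rho_z$ or an averaging argument. This is precisely the ``second moment strengthening of the Radhakrishnan--Ta-Shma approach'' promised in the introduction, and I expect it to require the most care to execute cleanly.
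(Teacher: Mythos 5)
Your high-level plan — second moment on the count of left vertices that ``mostly miss'' $S$, variance decomposed via overlaps in neighborhoods, and pairing this with the $\epsilon$-closeness hypothesis — is the right shape of argument, and matches the paper's overall strategy. But there are two linked gaps, and they are exactly at the places you flagged as ``the most care to execute cleanly''; the paper overcomes both with a single pruning move that your plan lacks.

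First, your covariance bound goes in the wrong direction. You write that you bound $\Pr[\text{both mostly miss}]$ by $\Pr[S \cap (T_x \cup T_{x'}) = \emptyset]$, but the latter is the probability that both \emph{totally} miss, which is a \emph{lower} bound (not an upper bound) on $\Pr[\text{both mostly miss}]$, since totally missing implies mostly missing. The clean negative-association structure (disjoint neighborhoods $\Rightarrow$ nonpositive covariance, or quantitatively $\Pr[\text{both}] / (p_x p_{x'}) \le (1-\lambda)^{-i_{x,x'}}$) holds only for the totally-misses events. The paper therefore switches to those events: it applies the second moment to $A = \sum_{x \in G} \mathbf{1}[x \text{ totally misses } S \setminus B]$, after first noting that ``$x \in G$ totally misses $S\setminus B$'' implies ``$x$ mostly misses $S$'', so that abnormality still gives $\Pr[A = 0] > \gamma$.

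Second, you correctly observe that $\epsilon$-closeness in $L_1$ does not directly control $\sum_z \rho_z^2$, and your suggested fixes (``auxiliary control of $\max_z \rho_z$ or an averaging argument'') do not resolve it, because a negligible-$L_1$-mass set of right vertices can have enormous degree and dominate the quadratic sum. The paper's solution is the \emph{pruning step}: discard the set $B$ of right vertices of degree more than $\frac{1}{\beta}\frac{ND}{M}$ (with $\beta = \lambda/5 - \epsilon$, so $|B| \le \beta M$), then use $\epsilon$-closeness to say $\Pr[C(X,J) \in B] \le \beta + \epsilon$, hence at least $N/2$ left vertices $G$ have few edges into $B$. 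Restricting to $[M]\setminus B$ yields a hard degree cap that replaces any sum-of-squares estimate, and restricting to $G$ is what makes ``totally misses $S\setminus B$'' imply ``mostly misses $S$''. This single device dissolves both obstacles simultaneously. Without it, your plan stalls exactly where you predicted; with it, the rest of your outline (exponential lower bound on $\E[A]$, $\Pr[A=0] > \gamma \Rightarrow \Var[A] \ge \gamma \E[A]^2$, count pairs sharing a neighbor via the degree cap, and split into the two branches of the $\min$) goes through essentially as you described.
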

\begin{proof}
 We begin with a pruning phase to remove the high degree vertices from the right side. At first reading, it will be helpful to consider the case where $B = \emptyset$.
 
 Let $\beta = \frac{\lambda}{5} - \epsilon$.
 Note that the average right degree is $ND/M$. 
 Define the set of high-degree right vertices by:
 $$ B = \{ z \in [M] \mid \mbox{ there are at least } \frac{1}{\beta} \frac{ND}{M} \mbox{ edges to $z$} \}.$$ 
 
 Thus $|B| \leq \beta M$.
 By the hypothesis on $C(X, J)$, we have
 $$ \Pr_{X \in [N], J \in [D]}[C(X,J) \in B] \leq \beta + \epsilon.$$
 
 Let $G$ be the set of all vertices on the left that do not have too many edges to $B$; namely:
 $$ G = \{ x \in [N] \mid x \mbox{ has at most $2(\beta + \epsilon) D$ edges to $B$ }\}.$$
 Then $|G| \geq N/2$.
 
 Now pick $S \in { [M] \choose \lambda M }$ uniformly at random.
 If there is a vertex $x \in G$ that totally misses $S \setminus B$, then by choice of $G$:
 $$ | \{ j \mid C(x,j) \in S \} | \leq 2(\beta + \epsilon) D < \frac{1}{2}\lambda D,$$
 namely, $x$ mostly misses $S$.
 
 By our hypothesis on the abnormality of $C$, the existence of such an $x$ cannot happen too often.
 Thus:
 \begin{align}
  \label{eqtotalmiss}
 \Pr_S [ \exists x \in G \mid x \mbox{ totally misses $S\setminus B$}] < 1-\gamma.
 \end{align}

 For each $x \in G$, let $A_x$ be the event that $x$ totally misses $S\setminus B$ under $C$.
 
 We are interested in the event that some $x \in G$ totally misses $S\setminus B$, namely, the event $\bigvee_{x \in G} A_x$.
  
 Observe that\footnote{Here we use the observation that $(1-\frac{D}{(1-\lambda)M}) < e^{-\frac{2D}{(1-\lambda)M}} < e^{-4D/M}$, which follows from the fact that $1-x < e^{-2x}$ for $x < 1/2$.} $$\Pr[A_x] \geq \frac{ {M - D \choose \lambda M}}{{M \choose \lambda M}}  \geq e^{-4\lambda D} =: p.$$
 Define $A = \sum_{x \in G} A_x$. 
 Then $\E[A] \geq |G| p.$
 
  By the second moment method, we have:
  $$ \Pr [A = 0] \leq \frac{\Var[A]}{\E[A]^2}.$$
  
  But Equation~\eqref{eqtotalmiss} tells us that $\Pr[A= 0] > \gamma$.
  
  Thus $\Var[A] \geq \gamma \E[A]^2 \geq \gamma \cdot  p^2 |G|^2$.
    
  We now extract some structure from this.
  
  We have:
  $$\Var[A] = \sum_{x, x' \in G} \left(\Pr[A_x \wedge A_{x'}] - \Pr[A_x] \Pr[A_{x'}] \right).$$
  
  Two simple observations about this expression:
  \begin{itemize}
   \item Each term in the sum above is at most $1$.  
\item  Furthermore, if $x, x'$ have no common neighbors in $[M]\setminus B$, then the corresponding term of the sum above is $\leq 0$. Indeed, if $U_x, U_{x'} \subseteq [M]\setminus B$ are the neighborhoods of $x$ and $x'$ in $[M]\setminus B$, and if they are disjoint, then:
  $$ \Pr[A_x] = \frac{  {M - |U_x| \choose \lambda M} }{{ M \choose \lambda M}},$$
  $$ \Pr[A_{x'}] = \frac{  {M - |U_{x'}| \choose \lambda M} }{{ M \choose \lambda M}},$$
  $$ \Pr[A_x \wedge A_{x'}] = \frac{  {M - |U_x \cup U_{x'}| \choose \lambda M} }{{ M \choose \lambda M}} = \frac{  {M - |U_x| - |U_{x'}| \choose \lambda M} }{{ M \choose \lambda M}}.$$
  So $$\frac{\Pr[A_x \wedge A_{x'}]}{\Pr[A_x]\Pr[A_{x'}]} = \prod_{i=0}^{\lambda M-1} \frac{(M-i) \cdot (M-|U_x| - |U_{x'}| - i)}{(M-|U_x|-i)\cdot(M-|U_{x'}|-i)} \leq 1.$$
  \end{itemize}
  
  Combining the largeness of $\Var[A]$ with these two observations tells us that there are many $x,x' \in G$ which have a common neighbor in $[M]\setminus B$.
  Specifically:
  $$  \gamma p^2 |G|^2\leq \Var[A] \leq \sum_{x,x' \in G} {\mathbf 1}[x, x' \mbox{ have a common neighbor in $[M]\setminus B$}].$$

  Thus there are at least $\gamma p^2 |G|^2 \geq \frac{1}{4} \gamma p^2 N^2$ pairs $x,x'$ from $G$ that have 
  a common neighbor in $[M] \setminus B$.

  Now the initial pruning we did will help us. Since all the vertices in $[M] \setminus B$ have degree at most $\frac{1}{\beta} \frac{ND}{M}$, we can bound the number of such pairs $x, x'$. For every vertex $x \in G$, there are at most $D \cdot \frac{1}{\beta} \frac{ND}{M}$  vertices $x'$ such that $x$ and $x'$ share a common neighbor in $[M] \setminus B$. Thus the total number of pairs $x,x'$ from $G$ that have a common neighbor in $[M] \setminus B$ is at most
  $$ N \cdot D \cdot \frac{1}{\beta}\frac{ND}{M} = \frac{1}{\beta} \frac{D^2}{M} N^2.$$
  
  Thus $\frac{1}{4} \gamma p^2 \leq \frac{1}{\beta} \frac{D^2}{M}$.
  Since $p = e^{-4\lambda D}$, we get:
  $$ M \leq \frac{4}{\gamma \beta} D^2 e^{8\lambda D}.$$
  This means that either $D \geq \Omega\left( \left(\gamma \beta M\right)^{1/4} \right) = \Omega\left( (\gamma \lambda M)^{1/4} \right)$,
  or else:
  $$D \geq \Omega\left( \frac{1}{\lambda}\log(\gamma \beta M ) \right) \geq \Omega\left( \frac{1}{\lambda}\log(\gamma \lambda M ) \right) .$$

\end{proof}

 \subsection{Putting everything together}\label{sec43}
 
 We now prove \Cref{thm:mergerlowerbound}.
 \begin{proof} 
 Let $E:[N]^2 \times [D] \to [M]$ be an $\epsilon$-extracting merger.
 
 Set $\lambda = 20 \epsilon$ and $\gamma = \epsilon$.
  \Cref{lem:findabnormal} tells us that there is some $y:=y_0$
 for which $E_y$ is $(\lambda, \gamma)$-abnormal.
 
 Now, since $E$ is $\epsilon$-extracting, we have that $E_y(X, J) = E(X,y,J)$ is $\epsilon$-close to the uniform distribution on $[M]$ for uniform and independent $X \in [N]$ and $J \in [D]$.
 Thus \Cref{lem:abnormalstructure} tells us that
 $$D \geq \min\left\{ \Omega\left(\frac{1}{\epsilon} \log(\epsilon^2 M) \right), \Omega\left( ( \epsilon^2 M)^{1/4} \right) \right\} $$.
 
 If $\epsilon \geq \frac{1}{M^{1/10}}$, then the first expression is smaller and 
 $$ D \geq \Omega(\frac{1}{\epsilon} \log M),$$ 
 and if $\epsilon < \frac{1}{M^{1/10}}$, then $E$ is also a $\frac{1}{M^{1/10}}$-extracting merger, and thus using the above lower bound for $\frac{1}{M^{1/10}}$ in place of $\epsilon$, we get that: 
 $$D \geq M^{\Omega(1)}.$$

\end{proof}

\newpage
\section{Seedless Extracting Multimergers and Projections of Partitions}\label{sec5}

In this section, we study seedless multimergers. Here our understanding
is far from complete, and we suggest many directions for research.

We begin by observing a connection between seedless multimergers and a
very natural and clean geometric question: 
how do we partition the unit cube $[0,1]^t$ into $c$ parts to ensure
that all $s$-dimensional
axis-parallel projections of all parts are small? We then prove some
interesting positive and negative results about special cases of
this general question. We conclude by collecting a number
of observations and questions about this natural partitioning problem.

\subsection{Seedless Multimergers with 1 bit output}\label{sec52}

In \Cref{sec31} we have already seen that there are no seedless mergers (i.e., with $s = 1$).
We now look into seedless multimergers.

Let us consider the simplest nontrivial situation: $t = 3$ and $s = 2$, and $m=1$
(we only try to extract $1$ bit of randomness), with $n$ big.
Suppose a given function $E: (\zo{n})^3 \to \{0,1\}$ is known to be a
$(\epsilon,s)$-multimerger. 
For convenience, we identify $\zo{n}$ with $[N]$, for $N = 2^n$.

By the multimerger property, for every function $f: [N]^2 \to [N]$,
the distribution of $E(X, Y, f(X,Y))$ should be $\epsilon$-close to uniform.
Let
$$P_{XY,0} = \{(x,y) \in [N]^2 : \exists z \mid E(x,y,z) = 0\}.$$
Notice that this is the projection of $E^{-1}(0)$ to two coordinates.

If $P_{XY,0}$ is bigger than $\frac{1+\epsilon}{2} N^2$, then we can violate the  multimerger property:
we define $f: [N]^2 \to [N]$ by $f(x,y) =$ $z$, if any, such that $E(x,y,z) = 0$".
Then $E(X,Y, f(X,Y))$ for uniform and independent $X,Y \in [N]$ is $\epsilon$-far from uniform.

We have a similar observation for all the other two dimensional projections, and also for the set $E^{-1}(1)$.
Thus if a seedless one-bit output multimerger for $3$-part $2$-where random sources exists,
then there is a partition of $[N]^3$ into 2 parts such that each part has all its 2-dimensional axis parallel projections
have size at most $\frac{1+\epsilon}{2} N^2$.

The connection also goes in reverse.
Suppose we have a partition $A,B$ of $[N]^3$ for which each part has all its $2$-dimensional axis parallel projections with size at most $\frac{1 + \epsilon}{2} N^2$. Let $E:[N]^3 \to \{0,1\}$ be the unique function with $E^{-1}(0) = A$ and $E^{-1}(1) = B$. Suppose $(X,Y,Z)$ is an $[N]^3$-valued random variable that is $2$-where random. Then we claim that $E(X,Y,Z)$ is $\epsilon$-close to the uniform distribution. Indeed, if $(X,Y)$ is uniformly distributed over $[N]^2$ (the cases of
$(Y,Z)$ and $(X,Z)$ being uniformly distributed are similar), then:
\begin{align*}
\Pr[E(X,Y,Z) = 0] &\leq \Pr_{X, Y} [ \exists z \in [N] \mbox{ s.t. } E(X,Y,z) = 0 ]\\
&\leq \Pr_{X, Y} [ \exists z \in [N] \mbox{ s.t. } (X,Y,z) \in A ]\\
&\leq \frac{|\Pi_{XY}(A)|}{N^2}\\
&\leq \frac{1 + \epsilon}{2},\\
\Pr[E(X,Y,Z) = 1] &\leq \Pr_{X, Y} [ \exists z \in [N] \mbox{ s.t. } E(X,Y,z) = 1 ]\\
&\leq \Pr_{X, Y} [ \exists z \in [N] \mbox{ s.t. } (X,Y,z) \in B ]\\
&\leq \frac{|\Pi_{XY}(B)|}{N^2}\\
&\leq \frac{1 + \epsilon}{2},\\
\end{align*}
which implies the desired $\epsilon$-closeness to uniform of $E(X,Y,Z)$.

The exact same argument applies to general $t, s$. We record this below.

\begin{theorem}
Let $N = 2^n$.
There exists a seedless $(n,d,t,m,\epsilon,s)$-multimerger if and only if there
is a partition of $[N]^t$ into two sets $A,B$ such that for every subset $U \subseteq [t]$
of size $s$, the projections $\Pi_U(A)$ and $\Pi_U(B)$ onto the coordinates $U$ are of size
at most $\frac{1 + \epsilon}{2}N^s$.
\end{theorem}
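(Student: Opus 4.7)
The plan is to formalize the argument already sketched in the text for the $t=3, s=2, m=1$ case and observe that the reasoning generalizes without modification to arbitrary $t, s$. A seedless $m=1$ multimerger is just a function $E: [N]^t \to \{0,1\}$ whose output is $\epsilon$-close to uniform on $\{0,1\}$ under every $s$-where random source on $[N]^t$, and any such $E$ induces a partition $A = E^{-1}(0)$, $B = E^{-1}(1)$ of $[N]^t$ into two parts and vice versa. The task is to show the $\epsilon$-closeness property is equivalent to the projection bound on the two parts.

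For the forward direction, I would assume $E$ is a seedless multimerger and argue contrapositively. Suppose for some $U \subseteq [t]$ of size $s$ we have $|\Pi_U(A)| > \frac{1+\epsilon}{2} N^s$ (the $B$ case is symmetric). Writing each point of $[N]^t$ as $(x_U, x_{\bar U})$ with $\bar U = [t] \setminus U$, I would define a map $f: [N]^s \to [N]^{t-s}$ by setting $f(x_U)$ to be any $x_{\bar U}$ with $(x_U, x_{\bar U}) \in A$ whenever such a witness exists (which by definition happens precisely for $x_U \in \Pi_U(A)$), and arbitrarily otherwise. Taking $X_U$ uniform on $[N]^s$ and $X_{\bar U} = f(X_U)$ yields an $s$-where random source whose $U$-marginal is uniform. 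By construction $\Pr[E({\mathbf X}) = 0] \geq |\Pi_U(A)|/N^s > \frac{1+\epsilon}{2}$, violating $\epsilon$-closeness of $E({\mathbf X})$ to uniform on $\{0,1\}$.

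For the backward direction, given a partition $A, B$ with all size-$s$ axis-parallel projections bounded by $\frac{1+\epsilon}{2}N^s$, define $E$ by $E^{-1}(0) = A$ and $E^{-1}(1) = B$. For any $s$-where random source ${\mathbf X}$ with uniform $U$-marginal,
$$\Pr[E({\mathbf X}) = 0] \leq \Pr_{{\mathbf X}_U}[\exists x_{\bar U} : ({\mathbf X}_U, x_{\bar U}) \in A] = \frac{|\Pi_U(A)|}{N^s} \leq \frac{1+\epsilon}{2},$$
and similarly $\Pr[E({\mathbf X}) = 1] \leq \frac{1+\epsilon}{2}$. Since the two probabilities sum to $1$, each also lies above $\frac{1-\epsilon}{2}$, so $E({\mathbf X})$ is within statistical distance $\epsilon/2 \leq \epsilon$ of uniform on $\{0,1\}$.

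Honestly there is no substantive obstacle here; the statement is essentially a definition unpacking. The only thing to be careful about is that both directions must quantify over \emph{all} subsets $U$ of size $s$ (for both $A$ and $B$), corresponding to the fact that the multimerger guarantee must hold against every choice of which $s$ coordinates are the uniform ones, and that we must account for both output bits in reducing TV-distance-to-uniform to a one-sided projection bound. Once these are noted, the two implications above are immediate.
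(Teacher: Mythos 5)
Your proposal is correct and takes exactly the same route as the paper. The paper sketches the argument for $t=3$, $s=2$, $m=1$ (identify $E^{-1}(0),E^{-1}(1)$ with $A,B$; if some size-$s$ projection of $A$ is large, build a bad $s$-where source by a choice function on the complementary coordinates; conversely, bound $\Pr[E=0]$ and $\Pr[E=1]$ by the projection densities) and then observes that it generalizes verbatim, which is precisely what you do. Your only addition is explicitly noting the one-sided-projection-bound to two-sided-statistical-distance step (the $\epsilon/2 \leq \epsilon$ remark), which is the same small imprecision the paper itself glosses over: under the usual normalization of statistical distance, $\Pr[E=0] > \tfrac{1+\epsilon}{2}$ only gives farness $\epsilon/2$, so as stated the ``if and only if'' loses a factor of two between the two directions; but this is inherited from the paper, not introduced by you.
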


Motivated by this, we consider general projections of partitions questions, where the set $[N]^t$ is partitioned
into $c$ parts, and we seek to minimize the maximum $s$-dimensional axis parallel projection of all the parts\footnote{For $c > 2$, the problem of getting such partitions with $c$ parts is somewhat related to the problem of multimergers with $\log_2(c)$ bit output, but the connection is not as tight as for the case of $c = 2$}.

As noted in the introduction, there is a basic bound for this problem that comes from Shearer's lemma. It says
that there is a lower bound of $\left(\frac{1}{c} \right)^{s/t} N^s$ on the size of some projection. This 
bound is usually not tight -- but it sometimes is! Whenever $c$ is a perfect $t$'th power, then this bound is tight,
and is realized by a partition into product sets. But for other $c$ this kind of partition does not work, and very interesting questions
ensue. In particular, we would like to highlight the case of $N$ gigantic, and $c = \poly(t)$ (so that $c$ is clearly not a perfect $t$'th power). 

Here is one observation that gives a flavor of what happens for large $t$ and $s$. When $c$ is a constant, and $s = t - o(\sqrt{t})$, there is a partition
so that all $s$-dimensional projections of size $\frac{1}{c} + o(1)$. This comes by considering suitable threshold partitions.
Extensions of this are related to the BKKKL~\cite{BKKKL,Friedgut} conjectures on low influence functions.

This question is also equivalent to a problem in the continuous domain about open covers of $[0,1]^t$.
Here we want to minimize the maximum $s$-dimensional projection size when we cover
$[0,1]^t$ by $c$ open sets. 

In the following sections, we discuss two results on partitioning in three dimensions. 
For the first result, we get (to our surprise!) the tight bound for partitioning the cube into two parts.
For the second result, we get nontrivial bounds (both upper and lower) for partitioning the cube into three parts.

\paragraph{Apology:} These questions are more naturally phrased as questions about covers rather than partitions. However we stick to the partition language because of the particular sequence of events that led us to these problems.

\section{Partitioning the 3-dimensional cube into two parts}\label{sec54}

In this section, we prove a tight bound on the largest 2 dimensional axis parallel projection 
of a part when partitioning $[0,1]^3$ into $2$ parts.

Let $\pi_{XY}, \pi_{YZ}, \pi_{XZ}: [0,1]^3 \to [0,1]^2$ be the 2-dimensional projection maps.

The following example gives a nice partitioning with small projections.

\begin{definition}\label{defn-maj-ext-3k} (Majority Partitioning Scheme)\\

We define the function $MAJ_{3}: [0,1]^{3} \rightarrow \zo{} $ as  
\begin{center}$MAJ_{3} (x,y,z)= Maj(x_1, y_1, z_1)$\end{center}
where $Maj$ denotes the Majority function on 3 bits, and where $x_1, y_1, z_1$ denote the indicator variables for whether $x > 1/2$, $y>1/2$, $z>1/2$ respectively.
 
We refer to the partition naturally induced by the output of $MAJ_{3}$ on the input space $ [0,1]^{3}$ i.e. $\{MAJ^{-1}_{3}(0), MAJ^{-1}_{3}(1)\}$, as the Majority Partitioning Scheme.
\end{definition}

We next record the observation tha all $2$-dimensional axis-parallel projections of all parts in the majority partitioning scheme on  $ [N]^{3}$ are of size at most $\frac{3}{4}N^2$, which is stated in the following lemma:

\begin{lemma}\label{lem-maj-proj}(Majority Partitions Optimally) 

Every $2$-dimensional projection of every partition in the majority partitioning scheme $MAJ_{3}$ on $ [0,1]^{3}$  is of size at most $\frac{3}{4}$.
\end{lemma}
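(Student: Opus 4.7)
The plan is to compute the six axis-parallel 2-dimensional projections of the two parts $A = \mathrm{MAJ}_3^{-1}(0)$ and $B = \mathrm{MAJ}_3^{-1}(1)$ explicitly, and check directly that each has area exactly $3/4$.

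First I would exploit the obvious symmetries to reduce to one projection of one part. The map $(x,y,z) \mapsto (1-x, 1-y, 1-z)$ swaps $A$ and $B$ while preserving axis-parallel projections (up to a measure-preserving reflection), so it suffices to bound the projections of $B$. Moreover, the $\mathrm{MAJ}_3$ partition is invariant under permutations of the coordinates, so all three axis-parallel projections of $B$ are congruent, and it suffices to analyze $\pi_{XY}(B)$.

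Next I would do the pointwise case analysis. A point $(x,y) \in [0,1]^2$ lies in $\pi_{XY}(B)$ iff there exists $z \in [0,1]$ with $\mathrm{Maj}(x_1, y_1, z_1) = 1$, where $x_1, y_1, z_1$ are the indicators of $x > 1/2$, $y > 1/2$, $z > 1/2$. Split into the four quadrants of $[0,1]^2$ determined by the thresholds $x = 1/2$ and $y = 1/2$: in the quadrant $x_1 = y_1 = 1$ the majority is $1$ regardless of $z$, while in the two quadrants with $x_1 + y_1 = 1$, choosing $z > 1/2$ gives majority $1$. Only in the quadrant $x_1 = y_1 = 0$ is the majority forced to be $0$ for every $z$. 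Hence
\[
\pi_{XY}(B) = \bigl\{(x,y) \in [0,1]^2 : x > 1/2 \text{ or } y > 1/2\bigr\},
\]
which has area $3/4$. By the two symmetries noted above, the same bound (in fact equality) holds for all six of the 2-dimensional axis-parallel projections of $A$ and $B$.

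There is no real obstacle here: the argument is essentially a three-line case check, and the only thing to verify carefully is that the projection of the ``losing'' quadrant in each case coincides exactly with the missing $\frac{1}{4}$-area square, so that the complement has area exactly $3/4$. I would write this up as a single short paragraph with the quadrant case analysis, followed by the invocation of the two symmetries to conclude the bound for the remaining five projections.
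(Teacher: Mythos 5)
Your proof is correct. The paper actually omits the proof of this lemma, treating it as an immediate observation (and illustrating it with Figure~1), so your explicit quadrant case analysis plus the two symmetries (coordinate permutation, and $(x,y,z)\mapsto(1-x,1-y,1-z)$ via self-duality of majority) is exactly the calculation the authors had in mind.
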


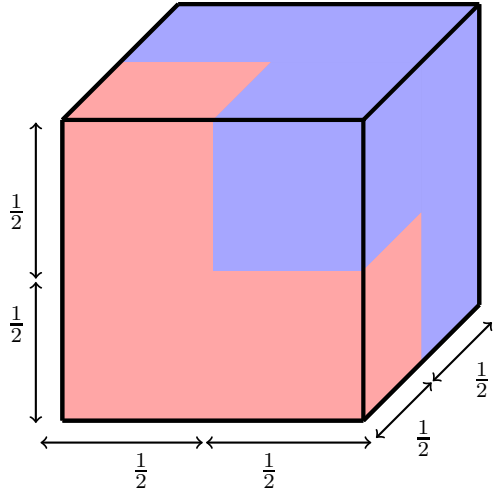
\begin{figure}
\centering 
\begin{tikzpicture}[scale=0.4]
  \foreach \x in{0,...,10}
{   
\draw[ultra thick](0,\x ,10) -- (10,\x ,10);
\draw[ultra thick](\x ,0,10) --(\x ,10,10);
\draw[ultra thick](10,\x ,10) -- (10,\x ,0);
\draw[ultra thick](\x ,10,10) -- (\x ,10,0);
\draw[ultra thick](10,0,\x ) -- (10,10,\x );
\draw[ultra thick](0,10,\x ) -- (10,10,\x );
} 
\fill[blue!35,ultra thick] (10,10,0) -- (0,10,0)--(0,10,5)--(10,10,5);
\fill[blue!35,ultra thick] (10,10,0) -- (10,10,5)--(10,0,5)--(10,0,0);

\fill[blue!35,ultra thick] (5,10,10) -- (10,10,10)--(10,10,5)--(5,10,5);

\fill[red!35,ultra thick] (0,10,10) -- (5,10,10)--(5,10,5)--(0,10,5);

\fill[red!35,ultra thick] (0,5,10) -- (0,10,10)--(5,10,10)--(5,5,10);
\fill[blue!35,ultra thick] (10,5,10) -- (10,5,5)--(10,10,5)--(10,10,10);

\fill[red!35,ultra thick] (0,5,10) -- (10,5,10)--(10,0,10)--(0,0,10);
\fill[red!35,ultra thick] (10,0,5) -- (10,5,5)--(10,5,10)--(10,0,10);

\fill[blue!35,ultra thick] (5,5,10) -- (10,5,10)--(10,10,10)--(5,10,10);

\fill[blue!35,ultra thick] (10,10,0) -- (0,10,0)--(0,10,5)--(10,10,5);
\fill[blue!35,ultra thick] (10,10,0) -- (10,10,5)--(10,0,5)--(10,0,0);

\fill[blue!35,ultra thick] (5,10,10) -- (10,10,10)--(10,10,5)--(5,10,5);

\fill[red!35,ultra thick] (0,10,10) -- (5,10,10)--(5,10,5)--(0,10,5);

\fill[red!35,ultra thick] (0,5,10) -- (0,10,10)--(5,10,10)--(5,5,10);
\fill[blue!35,ultra thick] (10,5,10) -- (10,5,5)--(10,10,5)--(10,10,10);

\fill[red!35,ultra thick] (0,5,10) -- (10,5,10)--(10,0,10)--(0,0,10);
\fill[red!35,ultra thick] (10,0,5) -- (10,5,5)--(10,5,10)--(10,0,10);

\fill[blue!35,ultra thick] (5,5,10) -- (10,5,10)--(10,10,10)--(5,10,10);
\fill[blue!35,ultra thick] (10,10,0) -- (0,10,0)--(0,10,5)--(10,10,5);
\fill[blue!35,ultra thick] (10,10,0) -- (10,10,5)--(10,0,5)--(10,0,0);

\fill[blue!35,ultra thick] (5,10,10) -- (10,10,10)--(10,10,5)--(5,10,5);

\fill[red!35,ultra thick] (0,10,10) -- (5,10,10)--(5,10,5)--(0,10,5);

\fill[red!35,ultra thick] (0,5,10) -- (0,10,10)--(5,10,10)--(5,5,10);
\fill[blue!35,ultra thick] (10,5,10) -- (10,5,5)--(10,10,5)--(10,10,10);

\fill[red!35,ultra thick] (0,5,10) -- (10,5,10)--(10,0,10)--(0,0,10);
\fill[red!35,ultra thick] (10,0,5) -- (10,5,5)--(10,5,10)--(10,0,10);

\fill[blue!35,ultra thick] (5,5,10) -- (10,5,10)--(10,10,10)--(5,10,10);

\draw [thick, <->] (11, 0,1.5) -- (11,0,6.6) node[midway, below  right] {$\frac 12$};
\draw [thick, <->] (11, 0,6.7) -- (11,0,11.5) node[midway, below  right] {$\frac 12$};

\draw [thick, <->] (12, 1,14.5) -- (6.5,1,14.5) node[midway, below  left] {$\frac 12$};
\draw [thick, <->] (1, 1,14.5) -- (6.4,1,14.5) node[midway, below  right] {$\frac 12$};

\draw [thick, <->] (-0.5,10.3,11) -- (-0.5,5.1,11) node[near end, above  left] {$\frac 12$};
\draw [thick, <->] (-0.5, 0.3,11) -- (-0.5,5,11) node[midway, above  left] {$\frac 12$};

\draw[ultra thick](10, 0,0) -- (10, 0,10);
\draw[ultra thick](10, 0,10) -- (0, 0,10);
\draw[ultra thick](0, 0,10) -- (0, 10,10);
\draw[ultra thick](0, 10,10) -- (0, 10,0);
\draw[ultra thick](0, 10,0) -- (10, 10,0);
\draw[ultra thick](10, 0,0) -- (10, 10,0);
\draw[ultra thick](10, 10,0) -- (10, 10,10)--(10,0,10);
\draw[ultra thick](10, 10,10) -- (0, 10,10);

\end{tikzpicture}
\caption{Majority Partitioning of the cube into $2$ parts, where the partitioned sets are coloured in red and blue. Observe that $all$ projections of the red set and the blue set are of equal size $\frac 34$, and \Cref{lem-3-2-2parts} implies this partitioning is optimal.}
\end{figure}

In the other direction, we first prove a lower bound on projection sizes 
for a discrete version of the problem.

Let $N$ be a large positive integer.
We reuse notation and let $\pi_{XY}, \pi_{YZ}, \pi_{XZ}: [N]^3 \to [N]^2$ be the 2-dimensional projection maps.

\begin{theorem}\label{lem-3-2-2parts}
Let $A, B \subseteq [N]^3$ be a partition.

Then one of the six 2-dimensional projections of $A$ and $B$
$$ \pi_{XY}(A), \pi_{YZ}(A), \pi_{XZ}(A),
\pi_{XY}(B), \pi_{YZ}(B), \pi_{XZ}(B)$$
has size at least $\frac{3}{4} N^2.$
\end{theorem}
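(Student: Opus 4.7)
My plan is to proceed by contradiction: assume every one of the six projections $\pi_{XY}(A),\pi_{YZ}(A),\pi_{XZ}(A),\pi_{XY}(B),\pi_{YZ}(B),\pi_{XZ}(B)$ has size strictly less than $\tfrac{3}{4}N^2$, and derive a contradiction.

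The first step is to repackage the assumption as information about ``pure columns.'' For each axis direction and each color, define the set of pure columns of that color in that direction; for example, let $T_A^Z \subseteq [N]^2$ be the set of $(x,y)$ such that $(x,y,z)\in A$ for every $z\in[N]$, and analogously define $T_B^Z, T_A^X, T_B^X, T_A^Y, T_B^Y$. Note that $(x,y)\in T_A^Z$ iff $(x,y)\notin \pi_{XY}(B)$, so $|T_A^Z| = N^2 - |\pi_{XY}(B)| > N^2/4$; by symmetry each of the six $T$-sets has size strictly greater than $N^2/4$.

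Next I combine the trivial two-dimensional projection inequality $|T_A^Z| \le |\pi_X(T_A^Z)|\cdot|\pi_Y(T_A^Z)|$ with AM--GM to conclude $|\pi_X(T_A^Z)| + |\pi_Y(T_A^Z)| > 2\sqrt{N^2/4} = N$, and do the same for each of the other five $T$-sets. Summing these six inequalities gives a strict lower bound $> 6N$ on the total size of the twelve resulting one-dimensional projections (four of which lie along each coordinate axis).

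The crux of the argument is a clean geometric disjointness observation. Suppose some $y_0$ lies in both $\pi_Y(T_A^Z)$ and $\pi_Y(T_B^X)$; pick witnesses $(x_0,y_0)\in T_A^Z$ and $(y_0,z_0)\in T_B^X$. Then $(x_0,y_0,z_0)$ lies in $A$ (via the pure $z$-column at $(x_0,y_0)$) and simultaneously in $B$ (via the pure $x$-column at $(y_0,z_0)$), contradicting $A\cap B = \emptyset$. Hence $\pi_Y(T_A^Z)$ and $\pi_Y(T_B^X)$ are disjoint subsets of $[N]$, so $|\pi_Y(T_A^Z)|+|\pi_Y(T_B^X)|\le N$. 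The same argument applies to each of the six pairings of an $A$-pure and a $B$-pure $T$-set whose column-directions differ---two such pairings per coordinate axis, six in total---producing six disjointness inequalities whose sum yields an upper bound of $6N$ on the very same twelve one-dimensional projections. This contradicts the strict lower bound $> 6N$ from the previous paragraph, completing the proof. The main obstacle, and the step I expect to be where all the real content lies, is identifying the right objects (the pure-column sets $T^{\bullet}_{\bullet}$) and noticing the disjointness: once they are in place, the calculation is a short accounting check enabled by the fact that each of the twelve projections appears exactly once both in the AM--GM sum and in the disjointness sum.
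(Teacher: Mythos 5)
Your proof is correct, and it takes a genuinely different route from the paper's, even though the two proofs share the same two core observations (AM--GM applied to a ``pure'' set with known size, and the incompatibility of an $A$-pure line and a $B$-pure line in different coordinate directions meeting at a common coordinate). The paper fixes the $XY$-pair of projections and works slice by slice: for each $z$ it defines slice-wise pure sets $A_{Xz},B_{Xz},A_{Yz},B_{Yz}\subseteq[N]$, proves the per-slice inequality $\alpha_{Xz}+\beta_{Xz}+\alpha_{Yz}+\beta_{Yz}\le 1$ via a small case analysis combining AM--GM and the disjointness dichotomy, and then averages over $z$ to locate a large $XZ$- or $YZ$-projection. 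You instead work globally: each small $2$-dimensional projection directly yields a global pure-column set $T^\bullet_\bullet\subseteq[N]^2$ of size $>N^2/4$, you apply AM--GM to each such set's two $1$-dimensional projections to get twelve quantities summing to $>6N$, and you cap the same twelve quantities at $\le 6N$ by a perfect matching of disjointness inequalities. Your version is more symmetric (it treats all six projections on an equal footing rather than privileging $XY$), and it replaces the paper's slice-wise case analysis and averaging step with a clean one-shot double-count; the paper's version, for its part, makes more visible exactly which projection is forced to be large given a specified pair of small ones. Both arguments are tight against the majority partition.
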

\begin{proof}
Suppose $\pi_{XY}(A)$ and $\pi_{XY}(B)$ are both at most $\frac{3}{4} N^2$.

Fix $z \in [N]$. Let us consider the slice $S_z = [N]^2 \times \{z\}$,
and focus on the $X$ and $Y$ projections of the sets $A \cap S_z$ and $B \cap S_z$ (so four projections in all, each being a subset of $[N]$). 

Define\footnote{If $A,B$ was merely a cover of $[N]^3$ rather than a partition, the correct definition would be $$A_{Xz} = \{ x \mid \not\exists y \in [N] \mbox{ s.t. } (x,y,z) \in B\},$$ etc, and the rest of the proof would remain the same.}:
$$A_{Xz} = \{x \mid   \forall y \in [N], (x,y,z) \in A \}.$$
$$B_{Xz} = \{x \mid   \forall y \in [N], (x,y,z) \in B \}.$$
$$A_{Yz} = \{y \mid   \forall x \in [N], (x,y,z) \in A \}.$$
$$B_{Yz} = \{y \mid   \forall x \in [N], (x,y,z) \in B \}.$$

Let $\alpha_{Xz}, \beta_{Xz}, \alpha_{Yz}, \beta_{Yz} \in [0,1]$ be
their fractional sizes (= size divided by N).

Then we have the following:
\begin{itemize}
\item  $A_{Xz} \cap B_{Xz} = \emptyset$ and $A_{Yz} \cap B_{Yz} = \emptyset$.
Thus:
\begin{align}
\alpha_{Xz} + \beta_{Xz} \leq 1,\\
\alpha_{Yz} + \beta_{Yz} \leq 1.
\end{align}

\item $$\left( (A_{Xz} \times [N]) \cup ([N] \times A_{Yz})\right) \subseteq \pi_{XY}(A).$$
This is because any $(x,y) \in (A_{Xz} \times [N])$ has $(x,y,z) \in A$, and thus 
$(x,y) \in \pi_{XY}(A)$.

The fractional size of the left hand side is $1-(1- \alpha_{Xz})(1-\alpha_{Yz})$, and the fractional
size of the right hand side is $\leq 3/4$.

This gives us (after applying the AM-GM inequality\footnote{For any non-negative real numbers $x$ and $y$, $\sqrt{x\cdot y} \leq \frac{x+y}2 $}):
\begin{align}\label{eq-eta}
\alpha_{Xz} + \alpha_{Yz} \leq 1.
\end{align}

\item Similarly, $$\left( (B_{Xz} \times [N]) \cup ([N] \times B_{Yz})\right) \subseteq \pi_{XY}(B),$$

\begin{align}
\beta_{Xz} + \beta_{Yz} \leq 1.
\end{align}

\item At most one of $A_{Xz}, B_{Yz}$ can be nonempty, and at most one of 
$A_{Yz}, B_{X,z}$ can be nonempty. This is because $x \in A_{Xz}$ and
$y \in B_{Yz}$ imply that $(x,y,z) \in A$ and $(x,y,z) \in B$ respectively.
Thus at most one of  $\alpha_{Xz},\beta_{Yz}$, and at most one of 
$\alpha_{Yz}, \beta_{Xz}$ can be nonzero. 

\end{itemize}

Putting everything together, we get that only two of the four numbers
$\alpha_{Xz}, \beta_{Xz}, \alpha_{Yz}, \beta_{Yz}$ can be nonzero, and furthermore,
the sum of those two is bounded above by $1$.

Therefore, for each $z \in [N]$,
$$\alpha_{Xz} + \beta_{Xz} + \alpha_{Yz} + \beta_{Yz} \leq 1.$$

Averaging in $z$, we get that
$$\E_z [ \alpha_{Xz} + \beta_{Xz} + \alpha_{Yz} + \beta_{Yz}] \leq 1,$$
and thus one of the four numbers:
$$\E_z[\alpha_{Xz}], \E_z[\beta_{Xz}], \E_z[\alpha_{Yz}], \E_z[\beta_{Yz}]$$
is at most $1/4$.

Finally, observe that $(1-\E_z[\alpha_{Xz}])$ is the fractional size of $\pi_{XZ}(B)$ 
(and similarly for the other three numbers), and so one of the four projections
$$\pi_{XZ}(B), \pi_{XZ}(A), \pi_{YZ}(B), \pi_{YZ}(A)$$
has size at least $\frac{3}{4} N^2$.
\end{proof}

By a simple discretization argument, we get the following corollary:
\begin{corollary}
Any cover of $[0,1]^3$ by two open sets $A, B$ has one of the following 6 sets:
$$ \Pi_{XY}(A), \Pi_{YZ}(A), \Pi_{XZ}(A),  \Pi_{XY}(B), \Pi_{YZ}(B), \Pi_{XZ}(B)$$
having area at least $3/4$.
\end{corollary}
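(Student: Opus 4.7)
The plan is to discretize the cover and invoke \Cref{lem-3-2-2parts}. The crux is that, by compactness, one can choose a grid fine enough that each small cube lies wholly inside $A$ or wholly inside $B$, so the induced partition of $[N]^3$ is a legitimate input to the discrete theorem. Explicitly, I would set $K := [0,1]^3 \setminus B$ and $L := [0,1]^3 \setminus A$; these are compact (since $A,B$ are open), disjoint (since $A \cup B \supseteq [0,1]^3$), and satisfy $K \subseteq A$ and $L \subseteq B$. Then $\delta := d(K,L) > 0$, and I pick any integer $N$ with $\sqrt{3}/N < \delta$.

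Next, partition $[0,1]^3$ into the standard grid of closed cubes $C_{ijk} = I_i \times I_j \times I_k$ of side $1/N$, where $I_\ell = [(\ell-1)/N,\ell/N]$, and define
\[ A' = \{(i,j,k) \in [N]^3 : C_{ijk} \cap K \neq \emptyset\}, \qquad B' = [N]^3 \setminus A'. \]
A cube in $A'$ has diameter $\sqrt{3}/N < \delta$ and meets $K$, so is disjoint from $L$ and hence contained in $A$; a cube in $B'$ misses $K$, so is contained in $[0,1]^3 \setminus K = B$. Thus $(A', B')$ is a bona fide partition of $[N]^3$ with each cube cleanly labelled.

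Finally, I apply \Cref{lem-3-2-2parts} to $(A', B')$: at least one of the six discrete projections has size at least $\tfrac{3}{4} N^2$. In the representative case $|\pi_{XY}(A')| \geq \tfrac{3}{4} N^2$, for each $(i,j) \in \pi_{XY}(A')$ there is some $k$ with $C_{ijk} \subseteq A$, so every $(x,y) \in I_i \times I_j$ lifts (via any $z \in I_k$) to a point of $A$, giving $I_i \times I_j \subseteq \Pi_{XY}(A)$. Taking the union over such $(i,j)$ yields $\mathrm{area}(\Pi_{XY}(A)) \geq |\pi_{XY}(A')|/N^2 \geq 3/4$, and the remaining five cases are symmetric. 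I do not anticipate any real obstacle; the one subtle point is making $(A',B')$ simultaneously a partition and a refinement of the cover (so that the discrete lemma applies verbatim), and the compactness-based choice of $\delta$ is exactly what guarantees this.
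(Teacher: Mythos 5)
Your discretization is correct and is exactly the ``simple discretization argument'' the paper alludes to without spelling out: use the positive distance between the compact complements $K=[0,1]^3\setminus B$ and $L=[0,1]^3\setminus A$ to choose a grid fine enough that each cell lies wholly in $A$ or wholly in $B$, apply \Cref{lem-3-2-2parts} to the induced index partition, and lift a large discrete projection back to a union of grid squares inside the corresponding continuous projection. The only thing left implicit is the degenerate case $K=\emptyset$ or $L=\emptyset$ (so $\delta$ is vacuously large), but there the conclusion is immediate since one of $A,B$ then contains all of $[0,1]^3$.
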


Thus we  get that $MAJ_{3}$  is an {\it optimal} partition for partitioning $[N]^3$ into two parts.

\section{Partitioning the 3-dimensional cube into three parts} \label{sec6}

In this section we study the case of partitioning the 3 dimensional cube $[0,1]^3$ into $3$ parts.

We begin with a nice partition of $[0,1]^3$ into 3 parts so that each part has small $2$-dimensional axis-parallel projections.

\begin{definition}\label{defn-maj-GR} (Golden Ratio Partitioning Scheme)\\
Let $u$ be the positive root of $x^2+x=1$. We define the function $GR_{3}: [0,1]^{3} \rightarrow \{0,1,2\} $ as  
$$
GR_3(x,y,z)=
\begin{cases}
0,& |x|>u,|y|>u\\
1,& |x|\leq u, |y|\leq u, |z|\leq  \frac 12\\
2,  &\text{otherwise.}
\end{cases}
$$
 
We refer to the partition into 3 parts naturally induced by the output of $GR_{3}$ on the input space $ [0,1]^{3}$ i.e. $\{GR^{-1}_{3}(0), GR^{-1}_{3}(1), GR^{-1}_{3}(2)\}$, as the golden ratio partitioning scheme.
\end{definition}

Observe that all $2$-dimensional projections of all partitions in the golden ratio partitioning scheme on  $ [0,1]^{3}$ are of size $u\leq 0.619$, which is stated in the following lemma:

\begin{lemma}\label{lem-gold-proj}(Golden Ratio Partitioning Bound) 
Every $2$-dimensional projection of every partition in the golden ratio partitioning scheme $GR_{3}$ on $ [0,1]^{3}$  is of size $u \leq 0.619$.
\end{lemma}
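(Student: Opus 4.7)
My plan is to prove the lemma by direct case-by-case computation: for each of the three parts $GR_3^{-1}(0)$, $GR_3^{-1}(1)$, and $GR_3^{-1}(2)$, I would write down its three axis-parallel $2$-dimensional projections and verify each has area at most $u$. The only tools needed are the defining identity $u^2 + u = 1$ (equivalently $1-u = u^2$) together with $u > 1/2$, along with elementary area computations for rectangles and their unions.

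The first two parts are axis-parallel boxes, so their projections are immediate. For $GR_3^{-1}(0) = \{x > u,\, y > u\}$, the XY-projection is a rectangle of area $(1-u)^2 = 2 - 3u$, and the XZ- and YZ-projections are strips of area $1 - u = u^2$; all are at most $u$ using $u > 1/2$ (so $2-3u \leq u$) and $u \leq 1$ (so $u^2 \leq u$). For $GR_3^{-1}(1) = \{x \leq u,\, y \leq u,\, z \leq 1/2\}$, the XY-projection has area $u^2 = 1-u \leq u$, and the XZ- and YZ-projections each have area $u/2 \leq u$.

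The third part $GR_3^{-1}(2)$ is the ``otherwise'' region, i.e., the complement of the previous two in $[0,1]^3$, and decomposes naturally as the union of the three rectangular pieces $\{x > u,\, y \leq u\}$, $\{x \leq u,\, y > u\}$, and $\{x \leq u,\, y \leq u,\, z > 1/2\}$. For each of the three axis-parallel projections, I would write the projection of $GR_3^{-1}(2)$ as the union of the projections of these three pieces and simplify the resulting area using $u^2 = 1-u$. This is where I expect the main obstacle to lie: the bound $u$ must arise from a just-tight combination of the golden-ratio identity $u^2 + u = 1$ with the threshold $1/2$ on $z$, so I would check each of the three projections of $GR_3^{-1}(2)$ very carefully, using both equivalent forms $u^2 = 1-u$ and $u(1+u) = 1$ of the golden-ratio relation, to make sure the bound $u$ emerges correctly.
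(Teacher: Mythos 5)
Your method — direct verification of all nine projections from the golden-ratio identity — is exactly the right idea, and your computations for $GR_3^{-1}(0)$ and $GR_3^{-1}(1)$ as written are correct. However, you stopped short of the one case you yourself flagged as the crux: the projections of $GR_3^{-1}(2)$. If you carry it out with the definition as literally stated, using your decomposition $\{x>u,\,y\le u\}\cup\{x\le u,\,y>u\}\cup\{x\le u,\,y\le u,\,z>1/2\}$, the $XY$-projection of this part is exactly $\{x\le u\}\cup\{y\le u\}$, whose area is $1-(1-u)^2 = 3u-1 \approx 0.854$. This exceeds $u$, so the lemma fails for the partition as stated in \Cref{defn-maj-GR}.

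The discrepancy is a typo in the definition of $GR_3$, not a flaw in your approach. The figure and its caption (red and green are translates of each other; cuts at $u$, $u$, and $1/2$) indicate that the intended partition is: $GR_3^{-1}(0)=\{x\le u,\ y\le u\}$ (the \emph{large} box, with $XY$-footprint $u\times u$), and $GR_3^{-1}(1),\ GR_3^{-1}(2)$ are the two halves of the complementary L-shaped prism $\{x>u\}\cup\{y>u\}$, split at $z=1/2$. With that partition: the box has $XZ$- and $YZ$-projections of area exactly $u$ and $XY$-projection of area $u^2=1-u\le u$; each L-shaped half has $XY$-projection of area exactly $1-u^2=u$ (this is precisely where the golden-ratio identity is used, and is tight) and $XZ$-, $YZ$-projections of area $1/2<u$. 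All nine projections are then at most $u$, with equality on four of them. You should redo the verification against this corrected partition; your case-by-case method then goes through without obstruction.
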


\begin{figure}
\centering 
\begin{tikzpicture}[scale=0.4]
  \foreach \x in{0,...,10}
{   

\draw[](0,\x ,10) -- (10,\x ,10);
\draw[](\x ,0,10) --(\x ,10,10);
\draw[](10,\x ,10) -- (10,\x ,0);
\draw[](\x ,10,10) -- (\x ,10,0);
\draw[](10,0,\x ) -- (10,10,\x );
\draw[](0,10,\x ) -- (10,10,\x );
} 
\fill[blue!35,ultra thick] (10,10,0) -- (4,10,0)--(4,10,6)--(10,10,6);
\fill[blue!35,ultra thick] (10,10,0) -- (10,10,6)--(10,0,6)--(10,0,0);

\fill[red!35,ultra thick] (4,10,10) -- (10,10,10)--(10,10,6)--(4,10,6);
\fill[red!35,ultra thick] (0,10,0) -- (4,10,0)--(4,10,10)--(0,10,10);

\fill[red!35,ultra thick] (4,10,10) -- (10,10,10)--(10,10,6)--(4,10,6);
\fill[red!35,ultra thick] (0,10,0) -- (4,10,0)--(4,10,10)--(0,10,10);

\fill[red!35,ultra thick] (4,10,10) -- (10,10,10)--(10,10,6)--(4,10,6);
\fill[red!35,ultra thick] (0,10,0) -- (4,10,0)--(4,10,10)--(0,10,10);

\fill[red!35,ultra thick] (0,5,10) -- (10,5,10)--(10,10,10)--(0,10,10);
\fill[red!35,ultra thick] (10,5,10) -- (10,5,6)--(10,10,6)--(10,10,10);
\fill[green!35,ultra thick] (0,5,10) -- (10,5,10)--(10,0,10)--(0,0,10);
\fill[green!35,ultra thick] (10,0,10) -- (10,0,6)--(10,5,6)--(10,5,10);
\draw [thick, <->] (11, 0,1.5) -- (11,0,8) node[midway, below  right] {u};
\draw [thick, <->] (11, 0,8.1) -- (11,0,11) node[midway, below  right] {1-u};

\draw [thick, <->] (11, 11,1.5) -- (4.7,11,1.5) node[midway, above  left] {u};
\draw [thick, <->] (0.7, 11,1.5) -- (4.6,11,1.5) node[midway, above ] {1-u};

\draw [thick, <->] (-0.5, 10.3,11) -- (-0.5, 5.3,11) node[near end, above  left] {$\frac 12$};
\draw [thick, <->] (-0.5, 0.3,11) -- (-0.5,5.2,11) node[midway, above  left] {$\frac 12$};

\draw[ultra thick](10, 0,0) -- (10, 0,10);
\draw[ultra thick](10, 0,10) -- (0, 0,10);
\draw[ultra thick](0, 0,10) -- (0, 10,10);
\draw[ultra thick](0, 10,10) -- (0, 10,0);
\draw[ultra thick](0, 10,0) -- (10, 10,0);
\draw[ultra thick](10, 0,0) -- (10, 10,0);
\draw[ultra thick](10, 10,0) -- (10, 10,10)--(10,0,10);
\draw[ultra thick](10, 10,10) -- (0, 10,10);
\end{tikzpicture}
\caption{Golden Ratio Partitioning of the cube into $3$ parts, where the partitioned sets are coloured in red, green and blue. The green and red parts are just translates of each other. Here $u$ is the positive root of $x^2+x=1.$}
\end{figure}
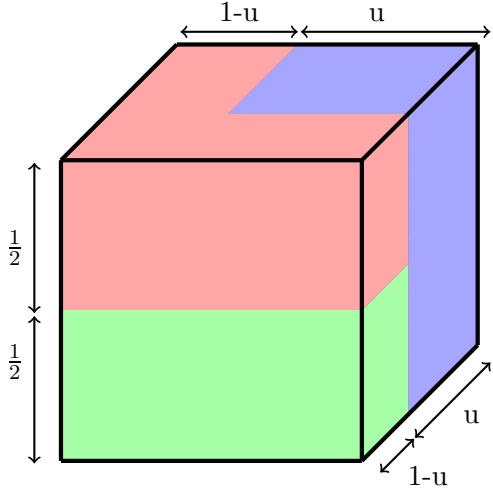

We do not know if this is the optimal partition into 3 parts.

For the rest of this section, we prove the best lower bound that we know.
As in the previous section, we do this via an analogous discrete problem.

Let $\etao \approx 0.5264$ be the real number $\in [0.5, 1.0]$
satisfying:
$$
 (2-3\etao) \cdot \left(2 - 2\sqrt{1- \etao}\right) + (3\etao - 1)  = \frac{1}{6} (4 - \etao).$$

\begin{theorem}\label{thm-3-2-3parts}
Let $A,B,C \subseteq [N]^{3} $  be a $3$-partition of  $[N]^{3}$. Then one of the nine $2$-dimensional projections of $A$, $B$ and $C$, i.e., 

$\Pi_{XY}(A),\Pi_{XY}(B),\Pi_{XY}(C), \Pi_{YZ}(A),\Pi_{YZ}(B), \Pi_{YZ}(C), \Pi_{XZ}(A),\Pi_{XZ}(B), \Pi_{XZ}(C)$ has size at least $\etao N^2$.

\end{theorem}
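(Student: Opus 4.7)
The plan is to extend the slicing argument of \Cref{lem-3-2-2parts} from two parts to three. Assume for contradiction that all nine 2-dimensional projections have size strictly less than $\etao N^2$. For each $z\in[N]$ and each part $P\in\{A,B,C\}$, I would define the line-sets
\[P_{Xz}=\{x:\forall y,\,(x,y,z)\in P\},\quad P_{Yz}=\{y:\forall x,\,(x,y,z)\in P\},\]
with fractional sizes $p_{Xz},p_{Yz}$, and write $s(z) = \alpha_{Xz}+\beta_{Xz}+\gamma_{Xz}+\alpha_{Yz}+\beta_{Yz}+\gamma_{Yz}$.

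Three per-slice constraints should follow as in the 2-part proof. (a)~The $X$-sets and the $Y$-sets are each pairwise disjoint, giving $\alpha_{Xz}+\beta_{Xz}+\gamma_{Xz}\le 1$ and its $Y$-analogue. (b)~The inclusion $(P_{Xz}\times[N])\cup([N]\times P_{Yz})\subseteq\pi_{XY}(P)$ combined with the assumed projection bound yields $1-(1-p_{Xz})(1-p_{Yz})<\etao$, hence by AM--GM $p_{Xz}+p_{Yz}<2-2\sqrt{1-\etao}$ for every part. (c)~For distinct $P\neq Q$, $P_{Xz}$ and $Q_{Yz}$ cannot both be nonempty, since any $(x,y)\in P_{Xz}\times Q_{Yz}$ would force $(x,y,z)\in P\cap Q=\emptyset$. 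This forces every $z$ into exactly one of: a \emph{solo} regime where all nonzero quantities come from a single part $P^*(z)$, or a \emph{degenerate} regime where all three $X$-quantities or all three $Y$-quantities vanish. In the solo regime, (b) gives $s(z)\le 2-2\sqrt{1-\etao}$; in the degenerate regime, (a) gives $s(z)\le 1$. Writing $q$ for the measure of the solo regime, this yields the upper bound $\E_z[s(z)]\le q(2-2\sqrt{1-\etao})+(1-q)$.

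The next step is to produce a matching \emph{lower} bound on $\E_z[s(z)]$ from the assumed bound on the six $XZ$- and $YZ$-projections. The basic identity is $\E_z[\alpha_{Xz}]=1-|\pi_{XZ}(B\cup C)|/N^2$ together with its five analogues, but the naive union bound $|\pi_{XZ}(B\cup C)|\le|\pi_{XZ}(B)|+|\pi_{XZ}(C)|$ only gives $\E_z[s(z)]\ge 6-12\etao$, which goes vacuous at $\etao=1/2$. The needed refinement is to track the ``bichromatic slack'' $|\pi_{XZ}(B)|+|\pi_{XZ}(C)|-|\pi_{XZ}(B\cup C)|$, which counts pixels $(x,z)$ whose $y$-fiber meets both $B$ and $C$; any such pixel at $z$ kicks $z$ out of the solo regime, so the total bichromatic slack across the six axis/part pairs is controlled by the measure $1-q$ of non-solo slices. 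Combining the six identities with the cover inequalities $\sum_{P}|\pi_{cZ}(P)|\ge N^2$ should yield $\E_z[s(z)]\ge\tfrac16(4-\etao)$. Pairing this with the upper bound above and optimizing over $q\in[0,1]$, the equality $(2-3\etao)(2-2\sqrt{1-\etao})+(3\etao-1)=\tfrac16(4-\etao)$ drops out at the extremal $q=2-3\etao$; strictness of the projection hypothesis promotes this to a strict inequality, contradicting the definition of $\etao$.

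\textbf{Main obstacle.} The delicate step is replacing the trivial lower bound $6-12\etao$ by the sharper $\tfrac16(4-\etao)$. In the two-part setting the identity $\E_z[\alpha_{Xz}] = 1-|\pi_{XZ}(B)|/N^2$ splits exactly because a point not in $A$ must lie in $B$; with three parts a ``not-in-$A$'' pixel could land in either $B$ or $C$, and the resulting union-bound slack has to be absorbed by the bookkeeping of the cross constraint (c). Finding the ``seemingly lucky inequalities'' (in the language of the introduction) that correctly charge this bichromatic slack against the non-solo slice measure, and thereby pin down the specific constant $\etao\approx 0.5264$ rather than some weaker value like $5/12$, is what I expect to require the real effort.
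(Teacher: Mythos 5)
Your first half is essentially right and close to the paper's: the per-slice line-sets $P_{Xz}$ you define coincide (since $A,B,C$ is a partition) with the paper's ``pure'' projections $\PA_{Xz} = A_{Xz}\setminus(B_{Xz}\cup C_{Xz})$, your constraints (a), (b), (c) are all proved in the paper, and your ``solo/degenerate'' dichotomy is exactly the paper's Lemma~\ref{3partsupport}. The upper bound $\E_z[s(z)]\le q\,u(\etao)+(1-q)$ is correct. (The paper works with a slight refinement $\lambda_z=s(z)-\delta_{Xz}-\delta_{Yz}$ that credits the slice when all three $X$- or all three $Y$-quantities are nonzero, and further splits the degenerate measure into $q_X, q_Y$ and uses $\min(g_X,1),\min(g_Y,1)$ in place of $1$, but those refinements are not where your plan breaks.)

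The gap is in the second half, exactly where you flag the difficulty. Your proposed lower bound hinges on the claim that a ``bichromatic pixel'' -- an $(x,z)\in\pi_{XZ}(B)\cap\pi_{XZ}(C)$ -- ``kicks $z$ out of the solo regime.'' This is false. Take the slice $S_z$ with $A\cap S_z=\{x\le N/2\}$, $B\cap S_z=\{x>N/2,\ y\le N/2\}$, $C\cap S_z=\{x>N/2,\ y>N/2\}$. Then every $x>N/2$ is a bichromatic pixel for the pair $(B,C)$ in the $XZ$-projection, yet the only nonzero pure quantity is $\Palpha_{Xz}=1/2$: the slice is firmly in the solo regime for $A$. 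So bichromatic slack in $\pi_{XZ}(B)\cap\pi_{XZ}(C)$ is not charged to the non-solo measure, and the accounting you sketch does not close. More generally, trying to derive a matching \emph{lower} bound on $\E_z[s(z)]$ from the $XZ$/$YZ$ projection hypotheses is not how the paper proceeds, and that route needs some genuinely new ingredient.

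What the paper actually does, and what you are missing, is twofold. First, rather than lower-bounding $\E_z[s(z)]$, it uses a simple inclusion--exclusion ``set intersection lemma'' on the \emph{full} projections $A_{Xz}\cup B_{Xz}\cup C_{Xz}=[N]$ to show
\[
\sum_{P\in\{A,B,C\}}\bigl(|\Pi_{XZ}(P)|+|\Pi_{YZ}(P)|\bigr)\ \ge\ \bigl(4-\E_z[\lambda_z]\bigr)N^2,
\]
so it suffices to give an \emph{upper} bound $\E_z[\lambda_z]\le 4-6\etao$. Second -- and this is the lemma your proposal has no substitute for -- the measures $q_X,q_Y$ of ``exactly two of the $X$- (resp.\ $Y$-) quantities nonzero'' satisfy $q_X+\E_z[\lambda_{Yz}]\le 1$ and $q_Y+\E_z[\lambda_{Xz}]\le 1$: a slice counted by $q_X$ has all $Y$-pure parts zero, so $\lambda_{Yz}=0$ there. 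Combined with the per-direction version of the display above, either $\E_z[\lambda_{Xz}]\le 2-3\etao$ (giving a large $XZ$ projection directly) or $q_Y\le 3\etao-1$, and symmetrically for $q_X$. Plugging $q_X,q_Y\le 3\etao-1$ and $q\ge 1-q_X-q_Y$ into the case-analysis upper bound then pins the maximum of $\E_z[\lambda_z]$ at exactly $(2-3\etao)u(\etao)+(3\etao-1)=\tfrac16(4-\etao)=4-6\etao$. That cross-direction constraint $q_X\le 3\etao-1$ is the ``seemingly lucky inequality'' you were looking for; without it (or an equivalent), the optimization over $q$ in your plan has nothing to prevent $q$ from being small, and the upper bound $q\,u(\etao)+(1-q)$ tends to $1$, which is not a contradiction.
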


Before embarking on the proof of \Cref{thm-3-2-3parts}, we note down a simple set intersection lemma that will be useful.
\begin{lemma}[Set intersection inequality]
Suppose $U,V,W$ be arbitrary sets which have union equal to $T$.

Then
$$|U| + |V| + |W|  \geq 2|T| - \left(|U \setminus (V \cup W) | + |V \setminus (W \cup U) |+|W \setminus (U \cup V) | \right) +  |U \cap V \cap W|.$$
\end{lemma}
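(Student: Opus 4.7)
I would reduce the claimed inequality to an identity by decomposing the union $T = U \cup V \cup W$ into the seven disjoint atoms of the three-set Venn diagram. Write
\[
a = |U \setminus (V \cup W)|, \quad b = |V \setminus (W \cup U)|, \quad c = |W \setminus (U \cup V)|,
\]
for the ``in exactly one set'' atoms;
\[
p = |(U \cap V) \setminus W|, \quad q = |(U \cap W) \setminus V|, \quad r = |(V \cap W) \setminus U|,
\]
for the ``in exactly two sets'' atoms; and $t = |U \cap V \cap W|$ for the ``in all three'' atom.

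Since these seven atoms partition $T$, I have $|T| = a+b+c+p+q+r+t$. Reading off which atoms lie in each of $U, V, W$ and summing yields
\[
|U| + |V| + |W| = (a+b+c) + 2(p+q+r) + 3t.
\]
Substituting the atomic sizes into the right-hand side of the lemma,
\[
2|T| - (a+b+c) + t = 2(a+b+c+p+q+r+t) - (a+b+c) + t = (a+b+c) + 2(p+q+r) + 3t,
\]
which is exactly the expression just obtained for $|U|+|V|+|W|$. So the asserted ``$\geq$'' is in fact an equality.

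Since the statement reduces to elementary Venn-diagram bookkeeping, there is no real obstacle. The only choice to make is the presentation: one could alternatively start from the standard three-set inclusion-exclusion identity $|U|+|V|+|W| = |T| + |U\cap V| + |U\cap W| + |V\cap W| - |U\cap V\cap W|$ and rewrite the ``only one set'' quantities as $|U \setminus (V \cup W)| = |T| - |V\cup W|$ and its symmetric counterparts, then collect terms. The atomic decomposition is a bit cleaner and makes the identity totally transparent, so that is the route I would write up.
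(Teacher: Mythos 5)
Your proof is correct, and it is in fact stronger than the stated lemma: the inequality is actually an equality, as your atomic Venn-diagram decomposition makes plain. The paper itself omits the proof entirely (it says ``The proof is simple and omitted''), so there is no paper argument to compare against; your bookkeeping via the seven disjoint atoms is a perfectly clean way to establish it. One small remark worth adding if you write this up: since the bound is an identity rather than a genuine inequality, the $\geq$ in the lemma statement is presumably there only because the authors intend to apply it after further relaxing some of the terms (as they do, replacing $|U\cap V\cap W|$ by a lower bound and the ``unique'' parts by upper bounds), not because any slack is lost in this step itself.
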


This lemma gives a way to get a lower bound on the average size of three sets $U,V,W$ that cover a set $T$
by first proving an upper bound on the sizes of the ``unique" parts $U\setminus (V \cup W), V \setminus (U \cup W), W \setminus (U \cup V)$.
The proof is simple and omitted.

We now prove \Cref{thm-3-2-3parts}.
\begin{proof}

Consider any partition $A$, $B$ and $C$  of $[N]^3$ into 3 parts. 
Suppose $\Pi_{XY}(A)$, $\Pi_{XY}(B)$ and $\Pi_{XY}(C)$ are at most $ \etao$. 
(If not we are done).

Fix $z \in [N]$.

Our first step is to consider the slice $S_z = [N]^2 \times \{z\}$, and
focus on the $X$ and $Y$ projections of the 3 sets $A \cap S_z, B \cap S_z, C \cap S_z$ (so six projections in all, each being a subset of $[N]$).

Define:
$$A_{Xz} = \{x \in [N] \mid \exists y \in [N] \mbox{ s.t. } (x,y,z) \in A \}.$$
$$B_{Xz} = \{x \in [N] \mid \exists y \in [N] \mbox{ s.t. } (x,y,z) \in B \}.$$
$$C_{Xz} = \{x \in [N] \mid \exists y \in [N] \mbox{ s.t. } (x,y,z) \in C \}.$$
$$A_{Yz} = \{y \in [N] \mid \exists x \in [N] \mbox{ s.t. } (x,y,z) \in A \}.$$
$$B_{Yz} = \{y \in [N] \mid \exists x \in [N] \mbox{ s.t. } (x,y,z) \in B \}.$$
$$C_{Yz} = \{y \in [N] \mid \exists x \in [N] \mbox{ s.t. } (x,y,z) \in C \}.$$

Note that:
$$ A_{Xz} \cup B_{Xz} \cup C_{Xz} = [N] $$
$$ A_{Yz} \cup B_{Yz} \cup C_{Yz} = [N] $$
since $A, B, C$ is a partition of $[N]^3$.

Next we indentify the ``pure" parts of these projections, defined below:

$$ \PA_{Xz} = A_{Xz} \setminus (B_{Xz} \cup C_{Xz} )$$
$$ \PB_{Xz} = B_{Xz} \setminus (C_{Xz} \cup A_{Xz} )$$
$$ \PC_{Xz} = C_{Xz} \setminus (A_{Xz} \cup B_{Xz} )$$
$$ \PA_{Yz} = A_{Yz} \setminus (B_{Yz} \cup C_{Yz} )$$
$$ \PB_{Yz} = B_{Yz} \setminus (C_{Yz} \cup A_{Yz} )$$
$$ \PC_{Yz} = C_{Yz} \setminus (A_{Yz} \cup B_{Yz} )$$

Furthermore, we have:
$$ \{x\mid \Pi^{-1}_{XZ} (x,z) \subseteq A\} \subseteq \PA_{Xz}$$
and five similar containments for $\PB_{Xz}, \PC_{Xz}, \PA_{Yz}, \PB_{Yz}, \PC_{Yz}$.

Let $\Palpha_{Xz},\Pbeta_{Xz},\Pgamma_{Xz},\Palpha_{Yz},\Pbeta_{Yz},\Pgamma_{Yz} \in [0,1]$ be their fractional sizes.

Note that since the corresponding sets are disjoint, we have:
\begin{align}
\label{Psumbound1}
 \Palpha_{Xz} + \Pbeta_{Xz} + \Pgamma_{Xz} \leq 1\\
\label{Psumbound2}
 \Palpha_{Yz} + \Pbeta_{Yz} + \Pgamma_{Zz} \leq 1
\end{align}

\begin{lemma}
\label{3partsupport}
 For any $z \in [N]$, out of the $6$ variables $\Palpha_{Xz}, \Pbeta_{Xz}, \Pgamma_{Xz}, \Palpha_{Yz}, \Pbeta_{Yz}, \Pgamma_{Yz}$, let $H$ be the set of those variables that are nonzero.
Then $H$ is a subset of at least one of the following sets of variables:
$$ 
\{\Palpha_{Xz}, \Palpha_{Yz}\},
\{\Pbeta_{Xz}, \Pbeta_{Yz}\},
\{\Pgamma_{Xz}, \Pgamma_{Yz}\},
\{\Palpha_{Xz}, \Pbeta_{Xz}, \Pgamma_{Xz}\},
\{\Palpha_{Yz}, \Pbeta_{Yz}, \Pgamma_{Yz}\} $$
\end{lemma}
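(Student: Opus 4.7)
The plan is to first reinterpret each of the six ``pure'' sets concretely, then extract a handful of pairwise incompatibility rules, and finally do a short case split on the support $H$.

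First, I would rewrite $\widetilde A_{Xz}$, etc., as statements about whole fibers of the slice $S_z$. If $x \in \widetilde{A}_{Xz}$ then $\exists y\ (x,y,z)\in A$ while for no $y$ is $(x,y,z)\in B$ or $(x,y,z)\in C$; since $A,B,C$ partition $[N]^3$, this forces the entire line $\{x\}\times[N]\times\{z\}$ to lie in $A$. Analogous characterizations hold for the other five pure sets (each corresponds to a fiber of the slice lying entirely inside a single part).

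Next, I would derive the incompatibility rules. Suppose $\Palpha_{Xz}>0$ and $\Pbeta_{Yz}>0$. Then there exist $x_0,y_0\in[N]$ such that $\{x_0\}\times[N]\times\{z\}\subseteq A$ and $[N]\times\{y_0\}\times\{z\}\subseteq B$. But then the single point $(x_0,y_0,z)$ would lie in both $A$ and $B$, contradicting the fact that $A,B,C$ form a partition. The same argument handles every ``mixed-letter, mixed-axis'' pair. In total we obtain: for any two distinct letters $L,L'\in\{A,B,C\}$, we cannot simultaneously have $\widetilde{L}_{Xz}$ and $\widetilde{L'}_{Yz}$ both nonempty. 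Equivalently, if $\Palpha_{Xz}>0$ then $\Pbeta_{Yz}=\Pgamma_{Yz}=0$, and symmetrically for the other five analogous implications.

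Finally I would split into cases based on the support $H$. If no $X$-variable is nonzero then $H\subseteq\{\Palpha_{Yz},\Pbeta_{Yz},\Pgamma_{Yz}\}$, giving the fifth listed set; symmetrically if no $Y$-variable is nonzero we land in the fourth. Otherwise, some $X$- and some $Y$-variable are both nonzero. Say $\Palpha_{Xz}>0$ and some $Y$-variable is nonzero; by the incompatibility rules the only possible nonzero $Y$-variable is $\Palpha_{Yz}$. But then $\Palpha_{Yz}>0$ in turn kills $\Pbeta_{Xz}$ and $\Pgamma_{Xz}$, so $H\subseteq\{\Palpha_{Xz},\Palpha_{Yz}\}$. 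The identical argument starting with $\Pbeta_{Xz}>0$ or $\Pgamma_{Xz}>0$ gives the other two two-element cases. These five subcases exhaust all possibilities, establishing the lemma.

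The argument is essentially combinatorial and I do not expect a real obstacle: the only thing to be careful about is the ``fiber'' reinterpretation of the pure parts in the first step, because the definitions of $\widetilde{A}_{Xz}$ etc.\ use the weaker ``$\exists y$'' rather than ``$\forall y$''; the equivalence relies crucially on $A,B,C$ being a partition (not merely a cover).
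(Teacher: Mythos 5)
Your proposal is correct and is essentially the same argument the paper uses, just spelled out: the paper's one-line proof cites exactly your pairwise ``mixed-letter, mixed-axis'' incompatibility ($\Palpha_{Xz}$ and $\Pbeta_{Yz}$ cannot both be nonzero, and the five symmetric variants), and the case split on $H$ that you carry out is the obvious completion. One small imprecision in your closing caveat: the direction of the fiber reinterpretation you actually use ($x\in\PA_{Xz}$ forces the whole fiber into $A$) needs only that $A,B,C$ \emph{cover} $[N]^3$, not that they are disjoint; disjointness is only needed for the reverse containment, which your argument never invokes.
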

\begin{proof}
 It is a consequence of the easy observation that $\Palpha_{Xz}$ and $\Pbeta_{Yz}$ cannot both be nonzero (and 5 similar easy observations).
\end{proof}

Let
$$\delta_{Xz} =  \begin{cases}  1 & \Palpha_{Yz}, \Pbeta_{Yz}, \Pgamma_{Yz} > 0 \\ 0 & \mbox{otherwise} \end{cases}.$$
$$\delta_{Yz} =  \begin{cases}  1 & \Palpha_{Xz}, \Pbeta_{Xz}, \Pgamma_{Xz} > 0 \\ 0 & \mbox{otherwise} \end{cases}.$$
Note that $\delta_{Xz}$ depends on the projections in the $Y$ direction (and vice versa).
The reason for this definition is the following observation: if $\delta_{Xz} = 1$, then we have
\begin{align}
A_{Xz} \cap B_{Xz} \cap C_{Xz} = A_{Xz} = B_{Xz} = C_{Xz} = [N],
\end{align}
and similarly, if $\delta_{Yz} = 1$, then we have
\begin{align}
A_{Yz} \cap B_{Yz} \cap C_{Yz} = A_{Yz} = B_{Yz} = C_{Yz} = [N],
\end{align}

which is something that our set intersection lemma can exploit.

Define 
$$\lambda_{Xz} = \Palpha_{Xz} + \Pbeta_{Xz} + \Pgamma_{Xz} - \delta_{Xz},$$
$$\lambda_{Yz} = \Palpha_{Yz} + \Pbeta_{Yz} + \Pgamma_{Yz} - \delta_{Yz}.$$
$$\lambda_{z} = \lambda_{Xz} + \lambda_{Yz}.$$

Note that by Equations~\eqref{Psumbound1}, \eqref{Psumbound2}, for all $z$,
\begin{align}
\label{lambdabound1}
\lambda_{Xz} \leq 1.\\
\label{lambdabound2}
\lambda_{Yz} \leq 1.
 \end{align}

By the set intersection lemma,
\begin{align*}
|A_{Xz}| + |B_{Xz}| + |C_{Xz}|  &\geq 2N - \left(|\PA_{Xz}| + |\PB_{Xz}| + |\PC_{Xz}|\right) + |A_{Xz} \cap B_{Xz} \cap C_{Xz}| \\
& \geq (2 - \lambda_{Xz}) N
\end{align*}
Similarly,
\begin{align*}
|A_{Yz}| + |B_{Yz}| + |C_{Yz}|  \geq (2 - \lambda_{Yz}) N
\end{align*}

Summing over $z \in [N]$ and adding these two equations, we get:
\begin{align}
\label{eqlambdaX}
\Pi_{XZ}(A) + \Pi_{XZ}(B) + \Pi_{XZ}(C) &\geq \left(2 - \E_{z}[\lambda_{Xz}] \right) N^2,\\
\label{eqlambdaY}
\Pi_{YZ}(A) + \Pi_{YZ}(B) + \Pi_{YZ}(C)  &\geq \left(2 - \E_{z}[\lambda_{Yz}] \right) N^2,\\
\label{eqlambda}
\Pi_{XZ}(A) + \Pi_{XZ}(B) + \Pi_{XZ}(C) + 
\Pi_{YZ}(A) + \Pi_{YZ}(B) + \Pi_{YZ}(C)  &\geq \left(4 - \E_{z}[\lambda_z] \right) N^2.
\end{align}

Our goal is now to get an upper bound on  $\E_{z}[\lambda_z]$.

To get our main result, we will show that $\E_{z}[\lambda_z] \leq \lambda^* := 4 - 6 \etao \approx 0.856$ (or else we find a large projection in some other way). This will show that one of the 6 projections on the left hand side is at least $\etao N^2$, as desired.

If we just want to get a projection of size $\geq \frac{1}{2} N^2$, then it suffices to show that $\lambda^* \leq 1$, and this turns out to be simpler.

Towards that end, we define $\alpha_X$ to be the fraction of $x$ for which $\{x\} \times [N] \subseteq \Pi_{XY}(A)$.
Similarly define $\alpha_Y, \beta_X, \beta_Y, \gamma_X, \gamma_Y$.

Note that since $\PA_{Xz} \times [N] \times \{z\} \subseteq A$, we have:
$$\alpha_{Xz} \leq \alpha_X,$$
and 5 similar inequalities.

Note that $\alpha_X \leq \eta$, and 5 similar inequalities.

Define $u: [0,1] \to [0,2]$ by:
$$ u(a) = 2 - 2 \sqrt{1-a}.$$
Using the argument used to arrive at \Cref{eq-eta} (by the AM-GM inequality), we have 
$$\Palpha_{Xz} + \Palpha_{Yz} \leq u(\etao) \quad\quad \mbox{ ( and thus } \alpha_X + \alpha_Y \leq u(\etao) \mbox{ ).}$$
and 2 similar pairs of inequalities.

Let
$$g_X = \max \{ \alpha_X + \beta_X, \beta_X + \gamma_X, \alpha_X + \gamma_X\}.$$
$$g_Y = \max \{ \alpha_Y + \beta_Y, \beta_Y + \gamma_Y, \alpha_Y + \gamma_Y\}.$$

By the inequalities above, we have:
$$ g_X + g_Y \leq 2 \etao + u(\etao).$$

Now let 
$$q_X = \Pr_{z \in [n]} [\mbox{exactly two of } \alpha_{Xz},\beta_{Xz},\gamma_{Xz} \mbox{ are nonzero}].$$
$$q_Y = \Pr_{z \in [n]} [\mbox{exactly two of } \alpha_{Yz},\beta_{Yz},\gamma_{Yz} \mbox{ are nonzero}].$$
$$q = \Pr_{z \in [n]} [\mbox{at most one of } \alpha_{Xz},\beta_{Xz},\gamma_{Xz} \mbox{ and at most one of } \alpha_{Yz},\beta_{Yz},\gamma_{Yz} \mbox{ is nonzero}]$$

We are now in a position to state a key lemma which will prove our lower bound:

\begin{lemma}$$\E_{z}[ \lambda_z] \leq  q \cdot u(\etao) + q_X \cdot \min(g_X,1) + q_Y\cdot \min(g_Y,1)).$$
\end{lemma}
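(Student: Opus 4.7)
The plan is to partition $[N]$ into cases according to which of the six pure-projection sizes $\Palpha_{Xz}, \Pbeta_{Xz}, \Pgamma_{Xz}, \Palpha_{Yz}, \Pbeta_{Yz}, \Pgamma_{Yz}$ are nonzero at each $z$, using \Cref{3partsupport} to restrict the possible patterns, and then bound $\lambda_z$ case by case. The observation that drives everything is that the three events underlying $q, q_X, q_Y$ are mutually exclusive, and that in the remaining cases (when all three X-pures are nonzero or all three Y-pures are nonzero) $\lambda_z$ is actually non-positive, so those cases can be dropped from the upper bound.

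First, I would work out the combinatorial consequences of the constraint ``$\Palpha_{Xz}$ and $\Pbeta_{Yz}$ cannot both be nonzero'' (and its five siblings) used to justify \Cref{3partsupport}. This forces that whenever two or more of $\Palpha_{Xz}, \Pbeta_{Xz}, \Pgamma_{Xz}$ are nonzero, \emph{all} of $\Palpha_{Yz}, \Pbeta_{Yz}, \Pgamma_{Yz}$ must vanish (and symmetrically). So for each $z$ exactly one of four disjoint events occurs: (A) at most one X-pure and at most one Y-pure is nonzero (probability $q$); (B) exactly two X-pures are nonzero and all Y-pures are zero (probability $q_X$); (C) exactly two Y-pures are nonzero and all X-pures are zero (probability $q_Y$); or (D) all three X-pures or all three Y-pures are nonzero.

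Next I would bound $\lambda_z$ in each case. In case (D) with all three X-pures nonzero we have $\delta_{Yz}=1$ and all Y-pures zero, so $\lambda_{Yz}=-1$ while $\lambda_{Xz}\le 1$ by \eqref{Psumbound1}, giving $\lambda_z\le 0$; the symmetric subcase is identical. In case (B) we have $\delta_{Xz}=\delta_{Yz}=0$ and $\lambda_z$ equals the sum of the two nonzero X-pures. This sum is at most $g_X$ (using $\Palpha_{Xz}\le \alpha_X$ etc.\ together with the definition of $g_X$), and is at most $1$ by \eqref{Psumbound1}, so $\lambda_z\le \min(g_X,1)$. Case (C) is handled symmetrically, yielding $\lambda_z\le \min(g_Y,1)$. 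In case (A), if both an X-pure and a Y-pure are nonzero they must be the matching pair (say $\Palpha_{Xz}$ and $\Palpha_{Yz}$), and the AM-GM inequality used to establish \eqref{eq-eta} gives $\Palpha_{Xz}+\Palpha_{Yz}\le u(\etao)$; if fewer variables are nonzero the bound $u(\etao)$ still holds. In all subcases of (A), $\delta_{Xz}=\delta_{Yz}=0$ so $\lambda_z\le u(\etao)$.

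Finally I would take expectation over $z$, discarding case (D) since it contributes a non-positive amount, to conclude
\[
\E_z[\lambda_z]\ \le\ q\cdot u(\etao) + q_X\cdot \min(g_X,1) + q_Y\cdot \min(g_Y,1).
\]
The main obstacle is really the bookkeeping in case (A): making sure that when both an X-pure and a Y-pure are nonzero they must indeed correspond to the same letter (this is where the earlier ``at most one of $\Palpha_{Xz}, \Pbeta_{Yz}$ is nonzero'' observations are essential), and carefully relating the pointwise sizes $\Palpha_{Xz}$ to the globally defined $\alpha_X$ so that the pair inequality $\alpha_X+\alpha_Y\le u(\etao)$ transfers to the slice. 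Once that is in place the lemma falls out from the disjointness of the events.
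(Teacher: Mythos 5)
Your proposal is correct and follows essentially the same route as the paper's proof: a case analysis over which of the six pure-projection fractions are nonzero, restricted by \Cref{3partsupport}, bounding $\lambda_z$ by $0$, $\min(g_X,1)$, $\min(g_Y,1)$, or $u(\etao)$ in the respective cases, and then averaging over $z$. You spell out a bit more carefully than the paper does that the four cases are disjoint and exhaustive, that $\delta$ forces the ``all-three'' case to contribute nonpositively, and that in the ``at most one each'' case the two nonzero pures must match letters --- but these are exactly the observations the paper relies on implicitly.
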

\begin{proof}
Let $z \in [N]$. We take cases on which of the 6 numbers $\Palpha_{Xz}, \Pbeta_{Xz}, \Pgamma_{Xz}, \Palpha_{Yz}, \Pbeta_{Yz}, \Pgamma_{Yz}$ are nonzero.

By Lemma~\ref{3partsupport}, there only a few cases to consider.
\begin{itemize}
\item If the first three numbers are nonzero or the second three numbers are nonzero, then $\lambda_z$ is nonpositive because the sum of those three is at most $1$ (by Equations~\eqref{Psumbound1}, \eqref{Psumbound2}, and $\delta_{Xz} = 1$ or $\delta_{Yz} = 1$.

\item If exactly two of the first three numbers are nonzero, then
$\lambda_z$ is at most $\min(g_X,1)$.

This happens for $q_X$ fraction of the $z$'s.

\item If exactly two of the second three numbers are nonzero, then
$\lambda_z$ is at most $\min(g_Y,1)$.

This happens for $q_Y$ fraction of the $z$'s
\item If at most one of the first three numbers and at most one of the second three numbers is nonzero, then $\lambda_z$ is at most $2-2\sqrt{1-\etao}$.

This happens for $q$ fraction of the $z$'s.
\end{itemize} 
\end{proof}

Now $q + q_X + q_Y \leq 1$.
At this point, we already see that $\E_{z}[\lambda_z] \leq 1$ (since $u(\etao) \approx  0.6237 \leq 1$), and this gives us the result that some projection has size at least $\frac{1}{2}N^2$.

To get our improved bound of $\etao N^2$, we need one more idea.
\begin{lemma}
 \begin{align}
 q_X + \E_{z}[\lambda_{Yz}] \leq 1\\
 q_Y + \E_{z}[\lambda_{Xz}] \leq 1
 \end{align}
\end{lemma}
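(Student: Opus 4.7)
The plan is to exploit the disjointness constraints encoded in Lemma~\ref{3partsupport} together with the fact that $\lambda_{Yz}$ is defined in terms of the $Y$-projection variables and $\delta_{Yz}$, the latter of which requires all three $X$-projections at slice $z$ to be nonzero.

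First I would focus on a fixed $z \in [N]$ for which \emph{exactly two} of $\Palpha_{Xz}, \Pbeta_{Xz}, \Pgamma_{Xz}$ are nonzero; say $\Palpha_{Xz}, \Pbeta_{Xz} > 0$ and $\Pgamma_{Xz} = 0$ (the other two subcases are symmetric). I claim that for such a $z$, all three $Y$-variables $\Palpha_{Yz}, \Pbeta_{Yz}, \Pgamma_{Yz}$ must be zero. Indeed, by Lemma~\ref{3partsupport}, the set $H$ of nonzero variables among the six must lie inside one of the five listed sets. Since $H$ contains $\Palpha_{Xz}$ and $\Pbeta_{Xz}$, the only one of the five sets that can accommodate both is $\{\Palpha_{Xz}, \Pbeta_{Xz}, \Pgamma_{Xz}\}$, which contains no $Y$-variable. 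Hence $\Palpha_{Yz} = \Pbeta_{Yz} = \Pgamma_{Yz} = 0$. Moreover $\delta_{Yz} = 0$ because $\Pgamma_{Xz} = 0$ (it requires all three $X$-variables to be positive). Therefore $\lambda_{Yz} = 0$ for such $z$.

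Next, for every other $z$ I would simply use the already-established bound $\lambda_{Yz} \leq 1$ from Equation~\eqref{lambdabound2}. Combining the two cases and averaging over $z$, the contribution to $\E_z[\lambda_{Yz}]$ from the ``exactly two $X$-variables nonzero'' event is $0$, and the contribution from its complement is at most $(1 - q_X)$. Hence
\[
\E_z[\lambda_{Yz}] \leq 0 \cdot q_X + 1 \cdot (1 - q_X) = 1 - q_X,
\]
which rearranges to $q_X + \E_z[\lambda_{Yz}] \leq 1$. The companion inequality $q_Y + \E_z[\lambda_{Xz}] \leq 1$ follows by swapping the roles of the $X$- and $Y$-directions (and correspondingly $\delta_{Xz}$ with $\delta_{Yz}$) throughout the above argument.

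There is no real obstacle here: the entire content is the combinatorial observation, read off from Lemma~\ref{3partsupport}, that ``two nonzero $X$-variables'' forces ``no nonzero $Y$-variables'' on that slice. The only thing one has to be careful about is to verify that $\delta_{Yz}$ indeed vanishes in the exceptional case, which is immediate since the third $X$-variable is zero by assumption.
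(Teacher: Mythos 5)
Your proof is correct and takes essentially the same approach as the paper: isolate the slices $z$ that contribute to $q_X$, observe that on those slices all the pure $Y$-variables and $\delta_{Yz}$ vanish (hence $\lambda_{Yz}=0$), and bound $\lambda_{Yz} \leq 1$ elsewhere via the disjointness constraint. The only difference is that you make explicit the appeal to Lemma~\ref{3partsupport}, which the paper leaves implicit.
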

\begin{proof}
We prove the first inequality, the second being similar.

$q_X$ is the fraction of $z$ for which exactly two of $\{\Palpha_{Xz}, \Pbeta_{Xz}, \Pgamma_{Xz} \}$ are nonzero. For such a $z$, we have $\Palpha_{Yz}=\Pbeta_{Yz}=\Pgamma_{Yz} = \delta_{Yz} = 0$, and thus $\lambda_{Yz} = 0$.

Along with Equations~\eqref{lambdabound1}, \eqref{lambdabound2}, this completes the proof of the lemma.
\end{proof}

By Equation~\eqref{eqlambdaX}, if $\E_{z}[\lambda_{Xz}]$ is at most $2 - 3 \etao$, then we get a projection onto the $XZ$ plane of size at least $\etao N^2$, and we are done. 
Similarly, by Equation~\eqref{eqlambdaY}, if $\E_z[\lambda_{Yz}]$ is at most $2-3\etao$, then we get a projection onto the $YZ$ plane of size at least $\etao N^2$, and we are done.
Thus we may assume that both 
$\E_{z}[\lambda_{Xz}]$ and $\E_z[\lambda_{Yz}]$ are at least $2-3\etao$.

By the previous lemma, we thus get that $q_X, q_Y \leq 3 \etao - 1$.

Summarizing everything we know:
$$ g_X + g_Y \leq 2 \etao + u(\etao)$$
$$ q_X, q_Y \leq 3 \etao - 1.$$
$$ q + q_X + q_Y \leq 1.$$
Under these constraints, we claim that:
$$ q \cdot u(\etao) + q_X \cdot \min(g_X,1) + q_Y \cdot \min(g_Y, 1) \leq \lambda^*.$$
By inspection, we see that the LHS is maximized when:
$$ g_X = 1, q_X = 3\etao - 1, q = 1-q_X = 2-3\etao,$$
which makes it evaluate to:
$$ (2-3\etao) \cdot u(\etao) + (3\etao - 1)  = \frac{1}{6} (4 - \etao) = \lambda^*,$$
where the first equality is the defining equation of $\etao$, and the second equality is the definition of $\lambda^*$. This completes the proof.

\end{proof}

This gives us a corresponding result about covers of $[0,1]^3$ with 3 open sets.

\begin{corollary}
Any cover of $[0,1]^3$ by three open sets $A, B, C$ has one of the following 9 sets:
$$ \Pi_{XY}(A), \Pi_{YZ}(A), \Pi_{XZ}(A), 
 \Pi_{XY}(B), \Pi_{YZ}(B), \Pi_{XZ}(B),
 \Pi_{XY}(C), \Pi_{YZ}(C), \Pi_{XZ}(C)$$
having area at least $\etao$.
\end{corollary}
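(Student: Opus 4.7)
The plan is to reduce the continuous statement to the discrete result Theorem~\ref{thm-3-2-3parts} via a standard grid discretization, analogous to the two-part case handled earlier in \Cref{sec54}. First I would invoke the Lebesgue number lemma: since $[0,1]^3$ is compact and $\{A,B,C\}$ is an open cover, there exists $\delta > 0$ such that every subset of $[0,1]^3$ of diameter less than $\delta$ lies entirely inside at least one of $A$, $B$, or $C$.

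Next, I would pick any integer $N$ with $\sqrt{3}/N < \delta$ and partition $[0,1]^3$ into the $N^3$ axis-parallel cubes $C_{ijk} = [i/N,(i+1)/N] \times [j/N,(j+1)/N] \times [k/N,(k+1)/N]$, each of diameter $\sqrt{3}/N < \delta$. By choice of $N$, every $C_{ijk}$ is entirely contained in at least one of $A$, $B$, $C$. Assigning each cube to exactly one such set (breaking ties arbitrarily) yields a partition $A', B', C'$ of $[N]^3$, where we identify each grid index $(i,j,k)$ with the cube $C_{ijk}$.

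Now I would apply Theorem~\ref{thm-3-2-3parts} to $(A', B', C')$ to obtain some projection -- say, without loss of generality, $\Pi_{XY}(A')$ -- of size at least $\etao N^2$. For each $(i,j) \in \Pi_{XY}(A')$ there is a $k$ with $(i,j,k) \in A'$, so $C_{ijk} \subseteq A$; projecting, the small square $[i/N,(i+1)/N] \times [j/N,(j+1)/N]$ lies in $\Pi_{XY}(A)$. Since these squares are pairwise disjoint across distinct $(i,j)$, their union has Lebesgue area $|\Pi_{XY}(A')|/N^2 \geq \etao$ and is contained in $\Pi_{XY}(A)$. Therefore $\Pi_{XY}(A)$ has area at least $\etao$, which gives the desired conclusion.

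There is no genuine obstacle in this argument; it is routine once the right discretization is set up. The only subtlety that requires care is ensuring that each grid cube is \emph{entirely} contained in one of the open sets rather than merely intersecting one -- this is precisely why the Lebesgue number lemma is invoked, since it guarantees that fully-contained small cubes project to fully-contained small squares, allowing the discrete projection size bound from Theorem~\ref{thm-3-2-3parts} to transfer directly into a genuine area lower bound in the continuous setting.
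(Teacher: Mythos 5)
Your proof is correct and takes essentially the same approach the paper intends: the paper presents this corollary as following from Theorem~\ref{thm-3-2-3parts} ``by a simple discretization argument,'' and your use of the Lebesgue number lemma to ensure each small grid cube lies entirely within one of the open sets, followed by transferring the discrete projection bound back to a Lebesgue-area bound, is precisely that routine discretization.
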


\section{Acknowledgements}
Part of this work was done when Vishvajeet was a visitor at the Institute for Advanced Study, Princeton and thanks Avi Wigderson for the same.

Swastik thanks K.P.S. Bhaskara Rao for immeasurable help with measurable sets and more.

We thank the anonymous reviewers for helpful comments on the writeup.

\nocite{*}
\bibliographystyle{alpha}
\bibliography{general}

\clearpage

\end{document}